\pgfplotsset{compat=newest} % 使用最新版本的 pgfplots
\numberwithin{equation}{section}
\newcommand{\be}{\begin{equation}}
	\newcommand{\ee}{\end{equation}}	
\theoremstyle{remark} 
\journal{***}
\begin{document}
	\begin{frontmatter}
			\title{Stability and Hopf bifurcation analysis of an age-structured SVIRS epidemic model with temporary immunity}
		
		\author{Songbo Hou \corref{cor1}}
		\ead{housb@cau.edu.cn}
		\address{Department of Applied Mathematics, College of Science, China Agricultural University,  Beijing, 100083, P.R. China}
		\author{Xinxin Tian}
		\ead{txx@cau.edu.cn}
		\address{Department of Applied Mathematics, College of Science, China Agricultural University,  Beijing, 100083, P.R. China}
		
		\cortext[cor1]{Corresponding author: Songbo Hou}
		\begin{abstract}
	
		In this paper, we investigate an SVIRS epidemic model that incorporates both temporary immunity and an age-structured recovery process. By reformulating the system as a non-densely defined abstract Cauchy problem, we establish the existence and uniqueness of solutions and derive the basic reproduction number \( \mathcal{R}_0 \). The stability of the equilibria is analyzed through the associated characteristic equations, and the occurrence of Hopf bifurcation near the endemic equilibrium is rigorously demonstrated. Our theoretical results reveal that temporary immunity plays a crucial role in shaping the stability of the endemic state. Finally, numerical simulations are carried out to verify and illustrate the analytical findings.
		
		\end{abstract}	
		\begin{keyword}  temporary immunity \sep vaccination\sep age structure \sep Hopf
			bifurcation  
			\MSC [2020] 35B35, 35B32, 37N25
		\end{keyword}
	\end{frontmatter}
\section{Introduction}
The worldwide dissemination of communicable diseases now represents one of the most pressing concerns confronting modern public health systems, exerting profound impacts on multiple dimensions of human society. Infectious diseases not only constitute an immediate peril to public health and mortality but also severely disrupt the normal functioning of society. For instance, in the economic sector, they lead to the shutdown of enterprises and the interruption of supply chains; in the educational field, they result in the suspension of schools and the restriction of teaching activities. Additionally, they affect people's mental health, among other aspects. Given the numerous harms that infectious diseases bring to human society, it is imperative to implement prevention and control measures against them under this backdrop.

Vaccination is among the most efficient ways to prevent and keep infectious diseases under control. Its principle lies in stimulating the body to produce a specific immune response by administering vaccines containing attenuated or inactivated pathogen components. In 1927, Kermack and McKendrick \cite{Kermack1927} proposed the SIR model. In this model, each variable corresponds to an epidemiological category: S (susceptible), I (infected), and R (recovered). This framework laid the foundation for subsequent infectious disease modeling research. Subsequently, scholars have developed improved models like SIS and SEIR \cite{ZAMAN200943}. Based on this framework, the impact of vaccination can be quantitatively analyzed by adding corresponding compartments (such as vaccinated individuals $V$) to the basic epidemiological models.

In recent years, many researchers have performed extensive studies on epidemiological vaccination models \cite{DUAN2014528,10,Goel2020,LIU20081,SONG2022106011,KRIBSZALETA2000183,SUN2023113206}. Kribs-Zaleta et al. \cite{KRIBSZALETA2000183} introduced a vaccination compartment V into the SIS model to study prevention and control strategies for diseases like pertussis and tuberculosis. A continuous age-structured SVIR model was formulated by Wang et al. \cite{10}, focusing on factors such as susceptibility, vaccination effectiveness, and disease recurrence. Duan et al. \cite{DUAN2014528} incorporated the age variable of vaccination into the SVIR model to systematically analyze the effects of vaccine efficacy and vaccination age on disease transmission. Liu et al. \cite{LIU20081} demonstrated that optimizing vaccination strategies could achieve disease elimination by constructing a dual-strategy SVIR model. Song et al. \cite{SONG2022106011} proposed an SEAIQR model that includes vaccination and quarantine delays, while Sun et al. \cite{SUN2023113206} studied an age-structured SVIR model. Zhu et al. \cite{eltit} postulated uniform natural mortality rates across age groups and examined vaccination effects, constructing the subsequent SVIR structure:
	\begin{equation}\label{1.1}
\begin{aligned}\left\{\begin{aligned}
		&\frac{dS(t)}{dt} = \Pi-\beta I(t)S(t) - (\mu+\iota) S(t)+ \eta V(t), \\
		&\frac{dV(t)}{dt}=\iota S(t)-(1-\epsilon)\beta I(t)V(t)-(\mu + \eta) V(t), \\
		&\frac{dI(t)}{dt}= \beta I(t) (S(t) + (1-\epsilon) V(t)) - (\mu + \gamma + d) I(t), \\
		&\frac{dR(t)}{dt}=\gamma I(t)-\mu R(t),\\
	\end{aligned}\right.\end{aligned}
\end{equation}
where $\Pi$ denotes the recruitment rate of susceptible individuals, $\beta$ denotes the effective transmission rate from interactions involving susceptible and infected individuals, $\mu$ corresponds to the baseline mortality rate shared across all compartments, $\epsilon\in [0,1]$ represents vaccine efficacy (with $\epsilon=0$ indicating completely ineffective vaccination and $\epsilon=1$ implying perfect protection), the vaccination rate for susceptible individuals is denoted by $\iota$, $d$ denotes the mortality rate induced by infection, $\gamma$ stands for the rate at which individuals recover from infection, with $ \eta $ denoting the immunity waning rate, leading vaccinated individuals back to susceptibility.

Age, as one of the vital biological indicators, holds a key position in the research of infectious disease modeling. Researchers have developed age-structured models that utilize partial differential equations (PDEs) and ordinary differential equations (ODEs) by treating age as a continuous variable. Such models are capable of more precisely depicting the dynamic characteristics of disease transmission. In relevant research, scholars like Webb et al. have conducted systematic investigations into age-structured models \cite{Webb2008,DUCROT2008501,Iannelli1994}. Numerous empirical investigations have demonstrated that these models offer significant advantages in studying the transmission dynamics of epidemics \cite{CHEN201616,LIU201518}.

Theoretical advancements in age-structured epidemic models have achieved major milestones. The establishment of central manifold theory was accomplished by Magal and Ruan \cite{center}, while simultaneously, Liu et al. \cite{Liu2011} established a theorem that formulates Hopf bifurcation conditions for non-densely defined Cauchy problems. These works laid a crucial mathematical foundation for subsequent research. Building upon this foundation, Wang and Liu \cite{WANG20121134} successfully achieved an effective analysis of the local Hopf bifurcation phenomenon in age-structured host-pathogen systems by transforming them into non-dense Cauchy problems. The modelling framework has expanded beyond infection age to incorporate recovery age \cite{DUAN2017613} and vaccination age structures \cite{YAN2025104310, doi:10.1142/S0218127424501967}.
\begin{figure}
	\centering
	\includegraphics[width=0.7\textwidth]{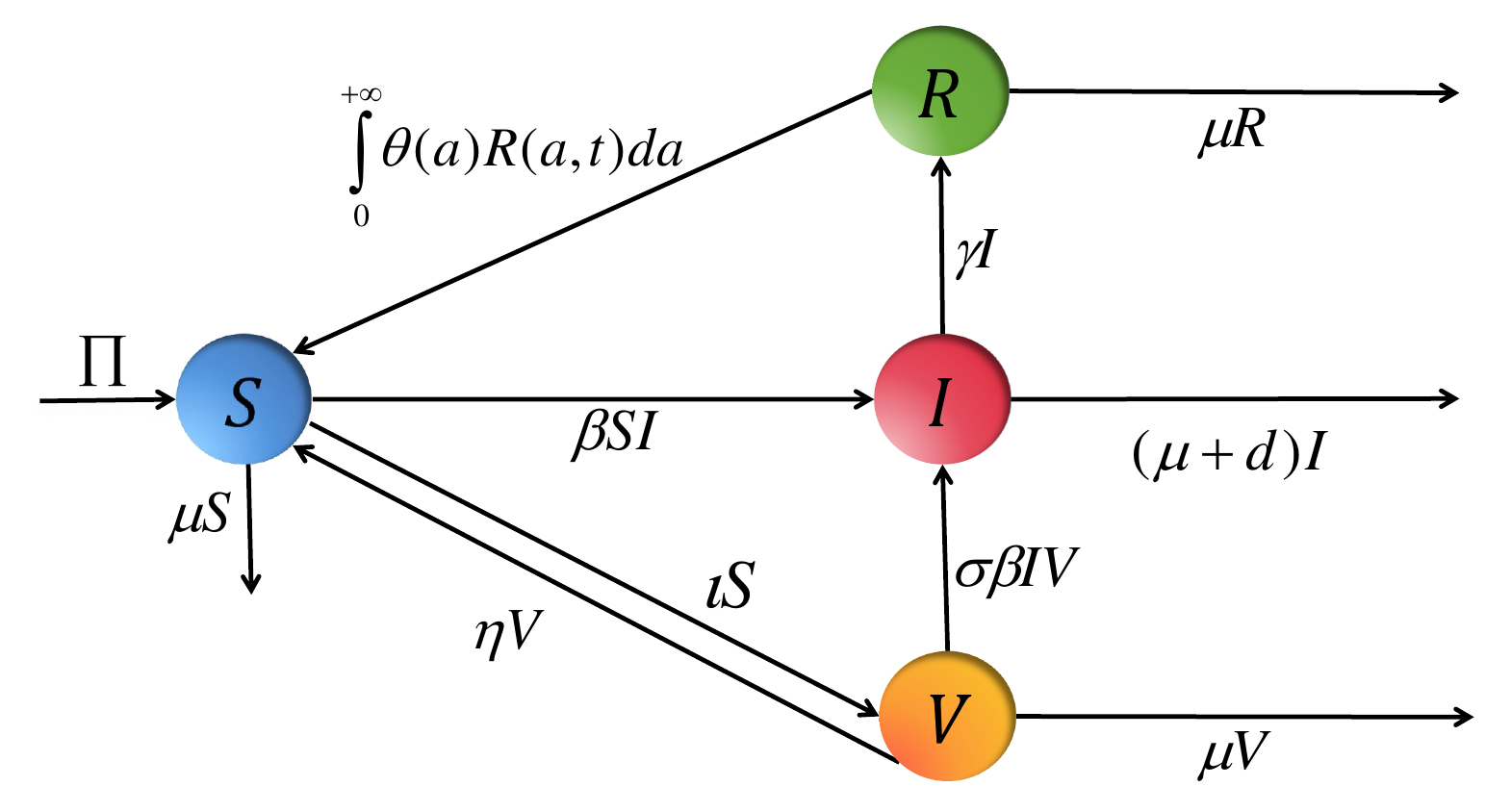}
	\caption{The pictorial representation of proposed model}
	\label{fig:00}
\end{figure}

Substantial progress has been made in modeling immunological dynamics. Epidemiological studies demonstrate that acquired immunity in recovered individuals exhibits temporal decay, leading to reversion to susceptibility. For instance, influenza infection confers durable strain-specific immunity but limited heterosubtypic protection \cite{KYRYCHKO2005495}. Two principal methodologies model transient immunity: (i) fixed-duration immunity through time-delay formulations \cite{BAI2025109413} and (ii) age-dependent immune waning processes. Implementing the latter approach, Duan et al. \cite{DUAN2017613} established the exact conditions required for Hopf bifurcation to occur in an age-structured immunity SIRS model. Zhang et al. \cite{doi:10.1142/S0218127421501832} extended earlier modeling frameworks by introducing an age-structured approach that accounts for both quarantine measures and short-term immune responses. Comprehensive reviews of age-structured modeling developments are available in references \cite{Jiang2023,Guo01012018,Tan2020,Lin01012019,zhang,Wang2023,CAI2017127}.

These findings motivate a substantial extension of model \eqref{1.1} in which recovery age $a$ is incorporated into the recovered individuals, resulting in an SVIRS system that features immunity duration dependence:
\begin{equation}\label{1.2}
	\begin{aligned}\left\{\begin{aligned}
			&\frac{dS(t)}{dt} = \Pi-\beta I(t)S(t) - (\mu+\iota) S(t)+ \eta V(t)+\int_{0}^{+\infty} \theta(a) R(a,t)da, \\
			&\frac{dV(t)}{dt}=\iota S(t)-\sigma\beta I(t)V(t)-(\mu + \eta) V(t), \\
			&\frac{dI(t)}{dt}= \beta I(t) (S(t) + \sigma V(t)) - (\mu + \gamma + d) I(t), \\
			&(\frac{\partial}{\partial t} + \frac{\partial}{\partial a})R(a,t)=-(\mu+\theta(a)) R(a,t),\\
		\end{aligned}\right.\end{aligned}
\end{equation}
subject to the initial and boundary conditions
\begin{equation}\label{1.3}
\begin{cases} 
		S(0) = S_0, V(0)=V_0, I(0) = I_0,R(a,0) = R_0(a),\\
	R(0,t) = \gamma I(t),
\end{cases}
\end{equation}
in which \( \sigma\in[0,1] \) stands for the efficiency of the vaccine, while $\theta(a)\in L_+^{\infty}((0,+\infty),\mathbb{R})$ signifies the rate at which acquired immunity in recovered individuals diminishes as age increases, satisfying the following assumptions. In order to more accurately depict this phenomenon, we have included a schematic diagram of the model \eqref{1.2} in Figure 1.
\theoremstyle{italiclemma}
\newtheorem{assumption}{\bf Assumption}[section]
\begin{assumption}
	Suppose that
	$$\theta\left(a\right)=\left\{
	\begin{array}
		{cc}0, & \quad0<a<\tau, \\
		\vartheta^{*}, & \quad a\geq\tau,
	\end{array}\right.$$
	where the immunity period is represented by $\tau$ and the constant $\vartheta^*\in \mathbb{R}^+$.
\end{assumption}
The following section sequence is adopted: Section 2 reconstructs system \eqref{1.2} as a non-densely defined abstract Cauchy problem and establishes solution existence and uniqueness. Section 3 derives the basic reproduction number $\mathcal{R}_0$, proves equilibrium point existence, and provides linearization outcomes. Sections 4 and 5 respectively address stability conditions of equilibrium points and Hopf bifurcation existence. Section 6 validates theoretical findings through numerical simulations, with the final section summarizing research contributions.
\section{Preliminaries}
Let the Banach space $X$ be defined by the product $\mathbb{R}^3\times L^1((0,+\infty),\mathbb{R})\times\mathbb{R}$, equipped with the norm
$$\|\phi\|=\|\phi_{1}\|_{\mathbb{R}}+\|\phi_{2}\|_{\mathbb{R}}+\|\phi_{3}\|_{\mathbb{R}}+\|\phi_{4}\|_{L^{1}((0,+\infty),\mathbb{R})}+\|\phi_{5}\|_{\mathbb{R}},\quad\forall\phi\in X.$$
Consider the linear operator  $B:D(B)\subset X\to X$ given by
$$B\phi=\begin{pmatrix}-(\mu + \iota)\phi_1  \\ 
	- (\mu + \eta)\phi_2 \\ 
	-(\mu + \gamma + d)\phi_3\\ 
	-\dfrac{d\phi_4}{da}-(\mu+\theta(a))\phi_4\\
	-\phi_4(0)\end{pmatrix},$$
with its domain defined as
$$D(B)=\mathbb{R}^3\times W^{1,1}((0,+\infty),\mathbb{R})\times\{0\}.$$
Since $$\overline{D(B)} =\mathbb{R}^3\times L^1((0,+\infty), \mathbb{R}) \times \{0\} =: X_0,$$ it follows that $X_0$ is a non-dense subset of $X$.\\
Define a nonlinear operator $F:X_{0}\to X$ as
$$F\phi=\left(\begin{array}{c}\Pi-\beta \phi_1\phi_3+\eta \phi_2+\int_{0}^{+\infty} \theta(a)\phi_4(a) da\\
	\iota \phi_1 -\sigma\beta \phi_3\phi_2\\
	\beta \phi_1\phi_3+\sigma\beta \phi_3\phi_2\\
	0\\
	\gamma\phi_3\end{array}\right),$$
where $\phi=(\phi_{1},\phi_{2},\phi_{3},\phi_{4},0)^{T}\in X_{0}$. With the state variable $r(t) = (S(t), V(t), I(t), R(a,t), 0)^{T} \in X$, and adopting the operator definitions $B$ and $F$ from above, system \eqref{1.2} admits the following abstract Cauchy formulation:
\begin{equation}\label{2.1}
\begin{cases}\dfrac{dr(t)}{dt}=B(r(t))+F(r(t)),&t>0,\\r(0)=r_{0},\end{cases}
\end{equation}
where $r_{0}=(S_{0},V_{0},I_{0},R_{0}(a),0)^{T}\in X$.
Set
$$\chi := \min \{\mu , \iota,\eta,\gamma+d \} > 0 \quad \text{and} \quad\Omega := \{ v \in C : \operatorname{Re}(v) > - \chi \} .$$
The results established in Reference \cite{article} yield the following lemma.
\vskip 0.2cm
\newtheorem{lemma}{Lemma}[section]  % 定义引理环境
\begin{lemma}
	Operator $B$ qualifies as a Hille–Yosida operator on $X$.
\end{lemma}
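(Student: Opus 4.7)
The plan is to directly verify the defining property of a Hille--Yosida operator: exhibit constants $\omega \in \mathbb{R}$ and $M \geq 1$ such that $(\omega,+\infty) \subset \rho(B)$ and $\|(\lambda I - B)^{-n}\|_{\mathcal{L}(X)} \leq M(\lambda-\omega)^{-n}$ for every $\lambda > \omega$ and $n \geq 1$. I would show that $\omega = -\chi$ and $M=1$ are admissible, by solving the resolvent equation $(\lambda I - B)\phi = \psi$ in closed form and then bounding the unique solution componentwise.

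For $\lambda > -\chi$ and $\psi=(\psi_1,\psi_2,\psi_3,\psi_4,\psi_5)^T \in X$, I would first observe that the first three coordinates decouple into scalar algebraic equations, yielding
\[ \phi_1 = \frac{\psi_1}{\lambda+\mu+\iota},\qquad \phi_2 = \frac{\psi_2}{\lambda+\mu+\eta},\qquad \phi_3 = \frac{\psi_3}{\lambda+\mu+\gamma+d}. \]
Because $\phi_5=0$ for every $\phi \in D(B)$, the fifth component of $(\lambda I - B)\phi = \psi$ reduces to the boundary condition $\phi_4(0)=\psi_5$, while the fourth component becomes the linear ODE $\phi_4'(a) + (\lambda+\mu+\theta(a))\phi_4(a)=\psi_4(a)$. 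Variation of constants then produces the candidate
\[ \phi_4(a) = \psi_5\,e^{-(\lambda+\mu)a-\Theta(a)} + \int_0^a e^{-(\lambda+\mu)(a-t)-[\Theta(a)-\Theta(t)]}\,\psi_4(t)\,dt, \]
where $\Theta(a):=\int_0^a \theta(s)\,ds \geq 0$, which I would argue lies in $W^{1,1}((0,+\infty),\mathbb{R})$ and hence places $\phi$ in $D(B)$.

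For the norm estimate, the inequalities $\mu+\iota,\mu+\eta,\mu+\gamma+d,\mu \geq \chi$ immediately give $|\phi_i| \leq |\psi_i|/(\lambda+\chi)$ for $i=1,2,3$. For the age-structured term, discarding the non-negative $\Theta$-contributions in the exponent and applying Fubini's theorem to the double integral will yield
\[ \|\phi_4\|_{L^1} \leq \frac{|\psi_5|}{\lambda+\mu} + \frac{\|\psi_4\|_{L^1}}{\lambda+\mu} \leq \frac{|\psi_5|+\|\psi_4\|_{L^1}}{\lambda+\chi}. \]
Summing the five estimates (and using $\phi_5=0$) gives the single bound $\|\phi\| \leq (\lambda+\chi)^{-1}\|\psi\|$, which simultaneously establishes surjectivity of $\lambda I - B$ and the resolvent estimate $\|(\lambda I-B)^{-1}\|_{\mathcal{L}(X)} \leq 1/(\lambda+\chi)$. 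Injectivity follows because the homogeneous problem forces $\phi_1=\phi_2=\phi_3=0$ and the ODE with zero data admits only the trivial $W^{1,1}$ solution. Iterating the bound then delivers $\|(\lambda I-B)^{-n}\| \leq (\lambda+\chi)^{-n}$ for every $n \geq 1$, which is the Hille--Yosida property with $M=1$ and $\omega=-\chi$.

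The step most likely to require care is the analysis of the fourth coordinate: one must verify that the explicit integral formula actually defines a $W^{1,1}$ function (so that $\phi \in D(B)$, not merely $\overline{D(B)}$), and that the Fubini interchange is legitimate for every $\psi_4 \in L^1$. Once these technicalities are dispatched, the sharp constant $M=1$ falls out directly from the non-negativity of $\theta$; notably, only $\theta \in L_+^\infty$ is used at this stage, not the piecewise-constant structure required later by Assumption 1.1.
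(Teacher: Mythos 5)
Your proposal is correct and follows essentially the same route as the paper: solve the resolvent equation componentwise (algebraic for the first three coordinates, variation of constants for the age-structured one), bound each piece using $\chi\le\mu$, and sum to obtain $\|(\lambda I-B)^{-1}\|\le(\lambda+\chi)^{-1}$, from which the Hille--Yosida property with $M=1$ follows by iteration. Your added care about $\phi_4\in W^{1,1}$, injectivity, and the power bounds only makes explicit what the paper leaves implicit.
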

\begin{proof}
If $(\phi_1,\phi_2,\phi_3,\phi_4,\phi_5)\in X$,	
$$(v I-B)^{-1}
\begin{pmatrix}
	\phi_1 \\
	\\
	\phi_2 \\
	\\
	\phi_3 \\
	\\
	\phi_4 \\
	\\
	\phi_5
\end{pmatrix}=
\begin{pmatrix}
	\tilde{\phi}_1 \\
	\\
	\tilde{\phi}_2 \\
	\\
	\tilde{\phi}_3 \\
	\\
	\tilde{\phi}_4 \\
	\\
	0
\end{pmatrix},$$
that is
$$
\begin{cases}
	(v+\mu+\iota)\tilde{\phi}_1=\phi_1, \\
	(v+\mu+\eta)\tilde{\phi}_2=\phi_2,\\
	(v+\mu+d+\gamma)\tilde{\phi}_3=\phi_3, \\
	\tilde{\phi}_4^{\prime}=-(v+\mu+\theta(a))\tilde{\phi}_4+\phi_4, \\
	\tilde{\phi}_4(0)=\phi_5. & 
\end{cases}$$
As a result, we get
\begin{equation}\label{2.2}
	\begin{cases}
		\tilde{\phi}_1 = \dfrac{\phi_1}{v + \mu + \iota}, \\
		\tilde{\phi}_2 = \dfrac{\phi_2}{v + \mu + \eta}, \\
		\tilde{\phi}_3 = \dfrac{\phi_3}{v + \mu + d + \gamma}, \\
		\tilde{\phi}_4(a) = e^{-\int_{0}^{a}(v+\mu+\theta(s))\,\mathrm{d}s} \phi_5 
		+ \int_{0}^{a} e^{-\int_{s}^{a}(v+\mu+\theta(s'))\,\mathrm{d}s'} \phi_4(s)\,\mathrm{d}s.
	\end{cases}
	\end{equation}
Applying integration to the fourth equation of system \eqref{2.2} yields 
\begin{equation}\label{2.3}\|\tilde{\phi}_{4}\|_{L^{1}} \leq \frac{1}{\operatorname{Re}(v)+\chi}\left(\|\phi_{4}\|_{L^{1}}+|\phi_{5}|\right).
\end{equation}	
From \eqref{2.2} and \eqref{2.3}, we get
$$
	|\tilde{\phi}_{1}|+|\tilde{\phi}_{2}|+|\tilde{\phi}_{3}|+\|\tilde{\phi}_{4}\|_{L^{1}} \le\frac{1}{\operatorname{Re}(v)+\chi}\left(|\phi_{1}|+|\phi_{2}|+|\phi_{3}|+\|\phi_{4}\|_{L^{1}}+|\phi_{5}|\right).
$$
Hence, we have
$$\left\|(v I -B)^{-1}\right\| \leq \frac{1}{\operatorname{Re}(v)+ \chi}, \quad \text{for all } v \in \Omega.$$
This implies that the operator $B$ qualifies as a Hille–Yosida operator.
\end{proof}
Set
$$X_0^+=\mathbb{R}_+^3\times L_+^1((0,+\infty),\mathbb{R})\times\{0\}.$$
The well-posedness of solutions to \eqref{2.1} can be verified using the theory in \cite{2001,Magal2018intro}.
\vskip 0.2cm
\newtheorem{thm}{\bf Theorem}[section]
\begin{thm}
 A unique continuous semi-flow solution $\{ T(t)\}_{t\geq 0}$ exists on $X_{0_{+}}$, such that for any $r(0) \in X_0^{+}$, the function $t \to T(t)r(0)$ represents the unique integrated solution of system \eqref{2.1}, which can be expressed as $$T(t)r(0) = r(0) + B\int_0^t T(s)r(0)\mathrm{d}s + \int_0^t F(T(s)r(0))\mathrm{d}s, \quad \forall\: t \geq 0.$$

Furthermore, on account of $B$ as the Hille Yosida operator, it yields a nondegenerate integrated semi-group $\{T_B(t)\}_{t\geq0}$ on $X.$ Introduce its part $B_{0}$, i.e.
$$B_{0}\phi=B\phi,\quad D(B_{0})=\{\phi\in D(B):B\phi\in X_{0}\}.$$
Then $B_{0}$ produces a $C_{0}$-semi-group $\{T_{B_{0}}(t)\}_{t\geq0}$ on $X_{0}$.
\end{thm}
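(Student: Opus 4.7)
The plan is to combine Lemma 2.1 with the integrated-semigroup theory for non-densely defined Cauchy problems developed in \cite{2001,Magal2018intro}. The argument splits into three parts: first extract the generation statements for $B$ and $B_0$ directly from the Hille--Yosida property already established; then verify that $F$ is Lipschitz on bounded subsets of $X_0$ and invoke the abstract existence theorem to obtain a local integrated solution satisfying the variation-of-constants formula; finally upgrade this local solution to a global positive semi-flow on $X_0^+$ using quasi-positivity of $F$ together with an a priori bound on the total population.

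For the generation step, Lemma 2.1 yields $\|(vI-B)^{-1}\|\le 1/(\operatorname{Re}(v)+\chi)$ for all $v\in\Omega$, so by the standard theory for Hille--Yosida operators, $B$ generates a nondegenerate integrated semigroup $\{T_B(t)\}_{t\ge 0}$ on $X$, and its part $B_0$ on $D(B_0)=\{\phi\in D(B):B\phi\in X_0\}$ generates a $C_0$-semigroup $\{T_{B_0}(t)\}_{t\ge 0}$ on the closed invariant subspace $X_0=\overline{D(B)}$. This already disposes of the last two assertions of the theorem.

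To produce the integrated solution I would check that $F:X_0\to X$ is Lipschitz on bounded subsets. The constant term $\Pi$ and the linear terms $\eta\phi_2$, $\iota\phi_1$, $\gamma\phi_3$ are Lipschitz globally; the bilinear incidence terms $\beta\phi_1\phi_3$ and $\sigma\beta\phi_2\phi_3$ are Lipschitz on bounded sets by the usual polarisation estimate; and Assumption 2.1 gives $\bigl|\int_{0}^{+\infty}\theta(a)\phi_4(a)\,da\bigr|\le\vartheta^{*}\|\phi_4\|_{L^1}$, so the nonlocal term is globally Lipschitz. With $B$ Hille--Yosida and $F$ locally Lipschitz, the results of \cite{2001,Magal2018intro} yield a unique maximal continuous integrated solution $r\in C([0,t_{\max}),X_0)$ satisfying the variation-of-constants identity of the theorem. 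Positivity and global existence follow from two observations: the explicit resolvent \eqref{2.2} shows that $(vI-B)^{-1}$ preserves the positive cone for $v$ large, so $\{T_{B_0}(t)\}_{t\ge 0}$ is positive on $X_0$, and $F$ is quasi-positive on $X_0^+$ in the sense that whenever a non-negative scalar coordinate of $\phi$ vanishes the corresponding coordinate of $F(\phi)$ is non-negative (through $\Pi$, $\iota\phi_1$, the incidence $\beta\phi_3(\phi_1+\sigma\phi_2)$, and the boundary input $\gamma\phi_3$); hence by the Magal--Ruan invariance criterion $T(t)X_0^+\subset X_0^+$ on the maximal existence interval. Adding the scalar equations to the age-integrated fourth equation yields $N'(t)\le \Pi-\mu N(t)$ for $N(t):=S(t)+V(t)+I(t)+\int_{0}^{+\infty}R(a,t)\,da$, which rules out finite-time blow-up and upgrades the local solution to a continuous semi-flow on $X_0^+$.

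The main obstacle I anticipate is the careful treatment of the boundary condition $R(0,t)=\gamma I(t)$: in the abstract formulation it is encoded as the fifth coordinate of $F$ paired with the constraint $\phi_5=0$ inside $D(B)$, and the non-denseness of $D(B)$ in $X$ means that classical $C_0$-semigroup arguments cannot be applied directly to $B$ itself. Ensuring that the variation-of-constants formula legitimately reproduces the original boundary-value problem \eqref{1.2}, and that the quasi-positivity check interacts correctly with this boundary coupling, is the delicate point; once it is handled via the Magal--Ruan integrated-semigroup machinery, the remaining verifications are essentially routine.
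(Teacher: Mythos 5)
Your proposal is correct and follows the same route as the paper, which in fact offers no written proof beyond invoking Lemma 2.1 and citing the integrated-semigroup theory of \cite{2001,Magal2018intro}. Your additional verifications (local Lipschitz continuity of $F$ on bounded sets, quasi-positivity, and the bound $N'(t)\le\Pi-\mu N(t)$ to exclude blow-up) are exactly the standard hypotheses of that theory and are consistent with what the paper establishes separately in Theorems 2.2 and 2.3.
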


From a biological standpoint, we restrict our analysis to non-negative solutions. The following results establish that system \eqref{1.2} preserves non-negativity and boundedness for all non-negative initial conditions.
\begin{thm}
Any positive initial condition \( (S(0), V(0), I(0), R(a,0))^T \) that fulfills the requirement from \eqref{1.3} ensures the solution to system \eqref{1.2} is non-negative for all \( t \geq 0 \).  
\end{thm}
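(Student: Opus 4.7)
The plan is to prove that the non-negative cone $X_0^+$ is forward-invariant under the semiflow $\{T(t)\}_{t\geq 0}$ furnished by Theorem 2.1, via a compartment-by-compartment analysis that exploits the particular structure of each equation of \eqref{1.2}. First I would handle $I(t)$: since the third ODE is linear in $I$ with a time-dependent coefficient that is locally bounded on any interval of existence, the variation-of-constants formula yields
\begin{equation*}
I(t) = I_0\, \exp\!\left(\int_0^t \bigl[\beta(S(s) + \sigma V(s)) - (\mu + \gamma + d)\bigr]\, ds\right),
\end{equation*}
so $I(t) \geq 0$ whenever $I_0 \geq 0$, regardless of the signs of $S$ or $V$ along the trajectory.

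Next I would integrate the PDE for $R(a,t)$ along the characteristics $a - t = \text{const}$. Using the boundary condition $R(0,t) = \gamma I(t)$ and the initial datum $R(a,0) = R_0(a)$ from \eqref{1.3}, this yields the explicit representation
\begin{equation*}
R(a,t) = \begin{cases}
R_0(a - t)\, \exp\!\left(-\int_{a-t}^{a}[\mu + \theta(\xi)]\, d\xi\right), & a \geq t, \\
\gamma\, I(t - a)\, \exp\!\left(-\int_{0}^{a}[\mu + \theta(\xi)]\, d\xi\right), & a < t.
\end{cases}
\end{equation*}
Since $R_0(\cdot) \geq 0$ and, by the previous step, $I(\cdot) \geq 0$, it follows that $R(a,t) \geq 0$ for almost every $a \geq 0$ and every $t \geq 0$; in particular, $\int_0^{+\infty} \theta(a) R(a,t)\, da \geq 0$.

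Finally, I would settle $S(t)$ and $V(t)$ via a first-exit-time argument. Let $t^{\star} := \inf\{t > 0 : S(t) < 0 \text{ or } V(t) < 0\}$ and suppose for contradiction that $t^{\star} < +\infty$. By continuity, $S(t^{\star}), V(t^{\star}) \geq 0$ with equality for at least one of them. If $S(t^{\star}) = 0$, the first equation of \eqref{1.2} together with the non-negativities already established gives
\begin{equation*}
\frac{dS}{dt}(t^{\star}) = \Pi + \eta\, V(t^{\star}) + \int_0^{+\infty} \theta(a) R(a,t^{\star})\, da \geq \Pi > 0,
\end{equation*}
contradicting the fact that $S$ has just become non-positive; similarly, if $V(t^{\star}) = 0$, then $\dot V(t^{\star}) = \iota\, S(t^{\star}) \geq 0$, again precluding a transversal crossing. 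In either case we reach a contradiction, so $t^{\star} = +\infty$ and non-negativity is preserved for all $t \geq 0$. The main technical delicacy is the mutual coupling of the four compartments; what makes the argument clean and non-circular is that the linearity of the $I$-equation in $I$ and the one-way dependence of the $R$-equation on $I$ permit establishing $I \geq 0$ and $R \geq 0$ without any a priori sign information on $S$ or $V$, thereby reducing the final task to the sub-tangential check for $S$ and $V$ alone.
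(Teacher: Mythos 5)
Your proof is correct in substance but organized differently from the paper's, and the difference is worth noting. The paper defines $l(t)=\min\{S(t),V(t),I(t)\}$, supposes a first time $t_1$ with $l(t_1)=0$, and for each of the three cases derives a linear differential inequality $\dot X\ge -bX$ on $[0,t_1]$, so that Gronwall gives $X(t_1)\ge X(0)e^{-bt_1}>0$, a \emph{strict} contradiction; the characteristics formula for $R$ is appended at the end. You instead exploit the decoupling: the $I$-equation is linear in $I$, so $I(t)=I_0\exp(\int_0^t[\beta(S+\sigma V)-(\mu+\gamma+d)]\,ds)\ge 0$ with no sign information on $S,V$; then $R\ge 0$ follows from the characteristics; only then do you treat $S,V$. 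This ordering is cleaner than the paper's, which tacitly uses $\int_0^\infty\theta(a)R(a,t)\,da\ge 0$ and $\eta V\ge 0$ in the $S$-inequality before non-negativity of $R$ has been established. The one place your argument is weaker is the final step for $V$: at a first exit time $t^\star$ with $V(t^\star)=0$ you only get $\dot V(t^\star)=\iota S(t^\star)\ge 0$, and in the degenerate case $S(t^\star)=V(t^\star)=0$ this is an equality, so a \emph{tangential} crossing is not literally ``precluded'' by the sign of the first derivative alone. The gap closes in one line if you borrow the paper's device: on a right neighborhood of $t^\star$ one has $\dot V\ge -\bigl(\sigma\beta\sup I+\mu+\eta\bigr)V$ (since $\iota S\ge 0$ there, $S$ being strictly increasing at $t^\star$ because $\dot S(t^\star)\ge\Pi>0$), whence $V(t)e^{ct}$ is nondecreasing and $V$ cannot become negative. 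With that patch both proofs are complete; yours buys a genuinely non-circular logical order, the paper's buys uniform strict-positivity contradictions in all three ODE compartments.
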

\begin{proof}
Denote by $l(t) = \min\{S(t), V(t), I(t)\}$ for each $t \ge 0$. The positivity of the initial values then yields $l(0) > 0$. Our objective is to verify that \( l(t) \ge 0 \) for every \( t \ge 0 \).  To this end, we argue by contradiction. Assume that there exists $t_1>0$ such that $l(t)>0$ for all $0 \leq t<t_1$, while $l\left(t_1\right)=0$. Through examining \( l(t_1) \), we consider the following cases:

(1) Given that $l(t_1) = S(t_1) = 0$, the initial equation from system \eqref{1.2} yields:
$$
\begin{aligned}
	\frac{dS(t)}{dt} &= \Pi - \beta I(t) S(t) - (\mu+\iota)S(t) + \eta V(t) + \int_{0}^{+\infty} \theta(a) R(a,t) \, da \\
	&\geq -\max_{0\leq t\leq t_1}\left\{\beta I(t)\right\}S(t) - (\mu+\iota)S(t) \\
	&= -b_1 S(t),
\end{aligned}
$$
for $t \in [0, t_1]$, where $b_1 = \max_{0\leq t\leq t_1}\left\{\beta I(t)\right\} + (\mu+\iota)$. Thus, $S(t_1) \geq S(0)e^{-b_1 t_1} > 0$, contradicting $S(t_1) = 0$.  

(2) If $l(t_1) = V(t_1) = 0$, the second equation implies: \\
$$
\begin{aligned}
	\frac{dV(t)}{dt}&= \iota S(t) - \sigma\beta I(t)V(t) - (\mu + \eta) V(t)\\
	&\geq -\max_{0\leq t\leq t_1}\left\{\sigma\beta I(t)\right\}V(t) - (\mu + \eta) V(t) \\
	&= -b_2 V(t),
\end{aligned}
$$
for $t \in [0, t_1]$, where $b_2 = \max_{0\leq t\leq t_1}\left\{\sigma\beta I(t)\right\} + (\mu + \eta)$. Hence, $V(t_1) \geq V(0)e^{-b_2 t_1} > 0$, contradicting $V(t_1) = 0$.  

(3) If $l(t_1) = I(t_1) = 0$, the third equation gives:  \\
$$
\begin{aligned}
	\frac{dI(t)}{dt} &= \beta I(t) (S(t) + \sigma V(t)) - (\mu + \gamma + d) I(t) \\
	&\geq - (\mu + \gamma + d) I(t) \\
	&= -b_3 I(t),
\end{aligned}
$$
for $t \in [0, t_1]$, where $b_3 = \mu + \gamma + d$. Therefore, $I(t_1) \geq I(0)e^{-b_3 t_1} > 0$, contradicting $I(t_1) = 0$.  

This demonstrates the non-negativity of $S(t)$, $V(t)$, and $I(t)$.  
Additionally, integrating the fourth equation of system \eqref{1.2} along the characteristics: 
$$
R(a,t) = 
\begin{cases}
	R(0,t-a)e^{-\int_{0}^{a} (\mu+\theta(s))\, ds}, & a \leq t, \\
	R_0(a-t)e^{-\int_{0}^{t}(\mu+\theta(a-t+s))\, ds}, & a > t,
\end{cases}
$$  
which shows that $R(a,t) \geq 0$ for nonnegative initial values.
\end{proof}
\begin{thm}
For every positive initial condition \( (S(0), V(0), I(0), R(a,0))^{T} \) that fulfills the initial constraints from \eqref{1.3}, system \eqref{1.2} is always bounded.   
\end{thm}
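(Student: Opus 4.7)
The plan is to introduce the total population $N(t) := S(t) + V(t) + I(t) + \int_0^{+\infty} R(a,t)\, da$ and show that $N$ satisfies a scalar linear differential inequality from which a uniform a priori bound is immediate. Once $N(t)$ is controlled, the non-negativity already proved in Theorem 2.2 immediately gives boundedness of each individual compartment $S(t)$, $V(t)$, $I(t)$, and of $\|R(\cdot,t)\|_{L^{1}}$.

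First I would differentiate $N(t)$ term by term. Summing the three ODEs in \eqref{1.2} causes the bilinear incidence terms $\beta I S$ and $\sigma\beta I V$, together with the vaccination flux $\iota S$ and the waning-immunity flux $\eta V$, to cancel pairwise. To treat $\frac{d}{dt}\int_0^{+\infty} R(a,t)\, da$, I would integrate the PDE in \eqref{1.2} with respect to $a$ over $(0,+\infty)$ and integrate the $\partial_a R$ contribution by parts; the surviving boundary term $R(0,t) - \lim_{a \to +\infty} R(a,t)$ reduces, by the boundary condition in \eqref{1.3}, to $\gamma I(t)$ once the limit at infinity is shown to vanish. This $\gamma I(t)$ cancels the $-\gamma I(t)$ appearing in the equation for $I$, while the $\int_0^{+\infty} \theta(a) R(a,t)\, da$ coupling entering the $S$-equation is absorbed by the identical loss generated by $\theta(a)$ in the $R$-integral. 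The net outcome is the clean identity $N'(t) = \Pi - \mu N(t) - d\, I(t)$, and in particular $N'(t) \le \Pi - \mu N(t)$.

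With this differential inequality in hand, a standard comparison argument (or Gronwall's lemma applied to $N(t) - \Pi/\mu$) yields $N(t) \le \max\{N(0),\, \Pi/\mu\}$ for every $t \ge 0$, so that $S(t)$, $V(t)$, $I(t)$ and $\int_0^{+\infty} R(a,t)\, da$ are all uniformly bounded in view of the non-negativity from Theorem 2.2.

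The main obstacle is rigorously justifying that $\lim_{a\to +\infty} R(a,t) = 0$, which is what kills the outgoing boundary term in the integration by parts. I would argue this from the explicit characteristic representation of $R(a,t)$ already displayed in the proof of Theorem 2.2: for each fixed $t$, the exponential factor $e^{-\int_0^{a}(\mu + \theta(s))\, ds}$ decays at least like $e^{-\mu a}$, so $R_0 \in L_+^{1}((0,+\infty),\mathbb{R})$ together with the local boundedness of $\gamma I$ on $[0,t]$ force $R(\cdot,t) \in L^{1}((0,+\infty),\mathbb{R})$ with $\partial_a R(\cdot,t) \in L^{1}$, which in turn forces $R(a,t)\to 0$ as $a\to +\infty$. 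With this justification in place, the computation above is rigorous and the required uniform bound on $N(t)$, hence on every compartment, follows.
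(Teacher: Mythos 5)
Your proposal is correct and follows essentially the same route as the paper: both define the total population $W(t)=S+V+I+\int_0^{+\infty}R(a,t)\,da$, cancel the transfer terms, integrate the age equation by parts using $R(0,t)=\gamma I(t)$, and arrive at $W'(t)\le \Pi-\mu W(t)$, hence the bound $\Pi/\mu$. The only difference is that you attempt to justify $\lim_{a\to+\infty}R(a,t)=0$ from the characteristic representation, whereas the paper simply assumes this decay; your added care is welcome but does not change the argument.
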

\begin{proof}
 Define \( W(t) = S(t) + V(t) + I(t) + \int_0^{+\infty} R(a, t)da \). Through system \eqref{1.2}, it follows that
	$$
	\begin{aligned}
		W'(t) &= S'(t) + V'(t) + I'(t) + \frac{d}{dt} \left( \int_0^{+\infty} R(a, t) \, da \right) \\
		&= \Pi - \beta I(t) S(t) - (\mu + \iota) S(t) + \eta V(t) + \int_0^{+\infty} \theta(a) R(a, t) \, da \\
		&\quad + \iota S(t) - \sigma \beta I(t) V(t) - (\mu + \eta) V(t) \\
		&\quad + \beta I(t) (S(t) + \sigma V(t)) - (\mu + \gamma + d) I(t) \\
		&\quad + \int_0^{+\infty} \left( -\frac{\partial R(a, t)}{\partial a} - (\mu + \theta(a)) R(a, t) \right) da \\
		&= \Pi - \mu S(t) - \mu V(t) - (\mu + \gamma + d) I(t) + \int_0^{+\infty} \left( -\frac{\partial R(a, t)}{\partial a} - \mu R(a, t) \right) da.
	\end{aligned}
	$$	
	With respect to the integral term, observe that:
	$$
	\int_0^{+\infty} \left( -\frac{\partial R(a, t)}{\partial a} - \mu R(a, t) \right) da = - \int_0^{+\infty} \frac{\partial R(a, t)}{\partial a} da - \mu \int_0^{+\infty} R(a, t) da.
	$$
	Assuming $ R(a, t) \to 0 $ sufficiently fast as $ a \to +\infty $, we have
	$$
	\int_0^{+\infty} \frac{\partial R(a, t)}{\partial a} da = R(+\infty, t) - R(0, t) = -R(0, t) = -\gamma I(t).
	$$
	Thus,
	$$
	\int_0^{+\infty} \left( -\frac{\partial R(a, t)}{\partial a} - \mu R(a, t) \right) da = \gamma I(t) - \mu \int_0^{+\infty} R(a, t) da.
	$$	
	
	Substituting back, we obtain	
$$	\begin{aligned}
		W'(t) &= \Pi - \mu S(t) - \mu V(t) - (\mu + \gamma + d) I(t) + \gamma I(t) - \mu \int_0^{+\infty} R(a, t) da\\
		&= \Pi - \mu \left( S(t) + V(t) + I(t) + \int_0^{+\infty} R(a, t) da \right) - d I(t) \\
		&=\Pi - \mu W(t) - d I(t) \\
		&\leq \Pi - \mu W(t).
	\end{aligned}
	$$
	
This indicates that 
$$ \limsup_{t \to +\infty} W(t) \leq \frac{\Pi}{\mu}.$$
The asymptotic behavior of system \eqref{1.2} is restricted to the biologically permissible domain:
$$ \Psi = \left\{ (S, V, I, R(\cdot)) \in X_0 : S, V, I, R(\cdot) \geq 0, \, S + V + I + \int_0^{+\infty} R(a, t) da \leq \frac{\Pi}{\mu} \right\},$$ 
which proves the ultimate boundedness of the system. 
\end{proof}
\section{Stability of equilibria}
In this section, we first show that system \eqref{1.2} admits an equilibrium point and then derive its linearization.
\subsection{Existence of equilibria}
We now utilize the subsequent theorem to demonstrate the presence of an equilibrium for system \eqref{1.2}.
\begin{thm}
For system \eqref{1.2}, there exists a disease-free equilibrium \( E^0 = (S^0, V^0, 0, 0) \), in which \( S^0 = \frac{\Pi(\mu+\eta)}{\mu(\mu+\iota+\eta)} \) and \( V^0=\frac {\iota\Pi}{\mu(\mu+\iota+\eta)} \). 
\end{thm}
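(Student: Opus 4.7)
The plan is to set every time derivative and the transport derivative in \eqref{1.2} to zero and solve the resulting stationary system under the restriction $I=0$ that defines a disease-free state. First I would substitute $I(t)\equiv 0$ into the boundary condition $R(0,t)=\gamma I(t)$ in \eqref{1.3}, which gives $R(0,\cdot)=0$, and then solve the stationary version of the fourth equation $\frac{dR}{da}=-(\mu+\theta(a))R$ with zero boundary value; the unique solution is $R(a)\equiv 0$, so the integral term $\int_{0}^{+\infty}\theta(a)R(a)\,da$ vanishes and no information from Assumption 2.1 on the profile of $\theta$ is needed for this step.

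With $I=0$ and $R\equiv 0$, the first and second equilibrium equations reduce to the linear $2\times 2$ system
\begin{equation*}
\Pi-(\mu+\iota)S+\eta V=0,\qquad \iota S-(\mu+\eta)V=0.
\end{equation*}
I would solve the second equation for $V=\iota S/(\mu+\eta)$ and substitute into the first; collecting terms produces the factor $(\mu+\iota)(\mu+\eta)-\iota\eta=\mu(\mu+\iota+\eta)$, from which the stated formulas $S^{0}=\Pi(\mu+\eta)/[\mu(\mu+\iota+\eta)]$ and $V^{0}=\iota\Pi/[\mu(\mu+\iota+\eta)]$ follow immediately. Because $\mu,\iota,\eta,\Pi$ are all positive, both components lie in $\mathbb{R}_{+}$, so $E^{0}$ belongs to the biologically admissible set $\Psi$ defined in Section 2, and one can check on the side that $S^{0}+V^{0}=\Pi/\mu$, consistent with the bound $W(t)\le\Pi/\mu$ in Theorem 2.3.

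Finally, I would note uniqueness: the determinant of the coefficient matrix of the reduced linear system is $\mu(\mu+\iota+\eta)>0$, so $(S^{0},V^{0})$ is the only non-trivial equilibrium compatible with $I=R=0$, which justifies the statement ``there exists a disease-free equilibrium'' as presenting a single explicit solution. There is no genuine obstacle in this argument; the only point that requires a small amount of care is the justification that $R\equiv 0$ is the only stationary profile compatible with $R(0)=0$, which follows because the stationary transport equation is an ODE in $a$ with Lipschitz right-hand side and hence has a unique solution through the zero initial value.
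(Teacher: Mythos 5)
Your proposal is correct and follows essentially the same route as the paper: substitute $I=0$, deduce $R\equiv 0$ from the boundary condition and the stationary transport equation, and solve the resulting $2\times 2$ linear system for $S^{0}$ and $V^{0}$. The extra remarks on uniqueness of the zero profile and the consistency check $S^{0}+V^{0}=\Pi/\mu$ are fine but do not change the argument.
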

\begin{proof}
We remark that the equilibria of system \eqref{1.2} are time-independent. Taking the representation $(S,V,I,R(a))$, these solutions satisfy the time-independent equations: 
\begin{equation}\label{3.1}
	\begin{cases}
		\Pi - \beta I S - (\mu+\iota) S + \eta V + \int_0^{+\infty} \theta(a) R(a) \, da = 0, \\
		\iota S - \sigma \beta I V - (\mu + \eta) V = 0, \\
		\beta I (S + \sigma V) - (\mu + \gamma + d) I = 0, \\
		\dfrac{dR(a)}{da} = -(\mu + \theta(a)) R(a), \\
		R(0) = \gamma I.
	\end{cases}
\end{equation}
	For the disease-free equilibrium $E^0 = (S^0, V^0, 0, 0)$, substitution of $I^0 = 0$ and $R^0(a) = 0$ reduces the system \eqref{3.1} to:
$$
\Pi - (\mu+\iota) S^0 + \eta V^0 = 0, \quad \iota S^0 - (\mu+\eta) V^0 = 0.
$$
Solving these yields:
$$
S^0 = \frac{\Pi(\mu+\eta)}{\mu(\mu+\iota+\eta)}, \quad V^0 = \frac{\iota \Pi}{\mu(\mu+\iota+\eta)}.
$$
As a consequence, the disease-free equilibrium is unique and exists.
\end{proof}
\begin{thm}
	Under the conditions $\mathcal{R}_0 > 1$ and $J(\tau)<\mu+\gamma+d$, system \eqref{1.2} possesses a unique endemic equilibrium $E^* = (S^*, V^*, I^*, R^*(a))$, where the parameters are defined as $$\mathcal{R}_0 = \frac{\Pi\beta(\mu+\eta+\sigma\iota)}{\mu(\mu+\iota+\eta)(\mu+\gamma+d)},\qquad J(\tau)=\frac{\vartheta^*\gamma e^{-\mu\tau}}{\mu+\vartheta^*}.$$
\end{thm}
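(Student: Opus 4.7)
The plan is to reduce the endemic equilibrium problem to a single algebraic equation in $I^*$, show that this equation is in fact a quadratic whose leading coefficient is negative and whose constant term is positive, and then invoke Vieta's formulas to conclude that exactly one of its roots is positive. The key structural remark is that summing the first three equilibrium equations of \eqref{3.1} together with the integral of the fourth eliminates the bilinear transmission terms $\beta I^* S^*$ and $\sigma\beta I^* V^*$ and produces a clean mass–conservation identity in which $\mathcal{R}_0$ and $J(\tau)$ play transparent sign-determining roles.

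First I would integrate the recovery equation to obtain $R^*(a) = \gamma I^*\exp\!\bigl(-\int_0^a(\mu+\theta(s))\,ds\bigr)$, then evaluate $\int_0^{+\infty}\theta(a)R^*(a)\,da$ as an elementary exponential integral using the piecewise form of $\theta$ in Assumption 1.1; after simplification this integral equals $J(\tau)\,I^*$. Because $I^*>0$ at an endemic equilibrium, the third equation of \eqref{3.1} collapses to $S^*+\sigma V^* = (\mu+\gamma+d)/\beta$, while the second equation gives $V^* = \iota S^*/(\sigma\beta I^* + \mu+\eta)$. Combining these two relations produces the closed-form rational expressions
\[
S^*(I^*) = \frac{(\mu+\gamma+d)(\sigma\beta I^* + \mu+\eta)}{\beta(\sigma\beta I^* + \mu+\eta+\sigma\iota)}, \qquad V^*(I^*) = \frac{\iota(\mu+\gamma+d)}{\beta(\sigma\beta I^* + \mu+\eta+\sigma\iota)}.
\]

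Adding the first three equations of \eqref{3.1} and substituting the integral above yields the identity $\Pi = \mu(S^* + V^*) + (\mu+\gamma+d-J(\tau))\,I^*$. Inserting the expressions above for $S^*(I^*)+V^*(I^*)$ and clearing the common denominator reduces this identity to a quadratic $A(I^*)^2 + BI^* + C = 0$ whose leading and constant coefficients I would identify as
\[
A = -\sigma\beta^2(\mu+\gamma+d-J(\tau)), \qquad C = \beta\Pi(\mu+\eta+\sigma\iota) - \mu(\mu+\gamma+d)(\mu+\eta+\iota).
\]
The hypothesis $J(\tau)<\mu+\gamma+d$ forces $A\le 0$ (strictly negative when $\sigma>0$), and the factorisation $C = \mu(\mu+\iota+\eta)(\mu+\gamma+d)(\mathcal{R}_0-1)$ shows that $\mathcal{R}_0>1$ is equivalent to $C>0$.

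By Vieta's formulas the product of the two roots equals $C/A<0$, so the roots are real and of opposite sign, producing a unique positive $I^*$; in the degenerate case $\sigma=0$ the equation reduces to $BI^* + C = 0$ with $B<0$ and $C>0$, which again yields a unique positive $I^*$. Substituting this $I^*$ back into $S^*(I^*)$, $V^*(I^*)$ and the exponential formula for $R^*(a)$ returns positive values, so the endemic equilibrium $E^*$ exists and is unique. The main obstacle I anticipate is the algebraic bookkeeping required to compress three coupled equilibrium relations into a quadratic with recognisable coefficients; the decisive simplification is to replace the first equation of \eqref{3.1} by the mass–conservation identity, since this eliminates the bilinear couplings and places $\mathcal{R}_0$ and $J(\tau)$ in direct control of the signs of $C$ and $A$.
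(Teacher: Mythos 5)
Your proposal is correct, and it takes a genuinely different route from the paper's at the decisive step. The paper performs the same preliminary reduction you do — solving the recovery equation for $R^*(a)$, computing $\int_0^{+\infty}\theta(a)R^*(a)\,da=J(\tau)I^*$, extracting $S^*+\sigma V^*=(\mu+\gamma+d)/\beta$ from the third equation and $V^*=\iota S^*/(\mu+\eta+\sigma\beta I^*)$ from the second — but then substitutes directly into the \emph{first} equilibrium equation to obtain a rational function $f(I^*)$, evaluates $f(0)=\Pi(1-1/\mathcal{R}_0)>0$ and $f(I^*)\to-\infty$ as $I^*\to+\infty$ (using $J(\tau)<\mu+\gamma+d$), and invokes the Intermediate Value Theorem. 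Your replacement of the first equation by the summed identity $\Pi=\mu(S^*+V^*)+(\mu+\gamma+d-J(\tau))I^*$ is legitimate: given that the second and third equations hold identically under your parametrisations $S^*(I^*)$, $V^*(I^*)$, the first equation is equivalent to the sum, and the sum kills the bilinear terms. The real payoff of your route is uniqueness. The paper's IVT argument only delivers \emph{existence}; its assertion that the root is unique does not follow from the boundary behaviour of $f$ alone, since $f$ is not shown to be monotone. Your identification of the cleared equation as a quadratic with $A=-\sigma\beta^2(\mu+\gamma+d-J(\tau))<0$ and $C=\mu(\mu+\iota+\eta)(\mu+\gamma+d)(\mathcal{R}_0-1)>0$, followed by Vieta's formulas ($C/A<0$ forces real roots of opposite sign), genuinely establishes the uniqueness claimed in the theorem, and your separate treatment of the degenerate linear case $\sigma=0$ is the right precaution. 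In short: same skeleton, but your endgame is both tidier algebraically and strictly more complete logically than the paper's.
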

\begin{proof}
	We first determine the equilibrium states of system \eqref{1.2} and then derive the basic reproduction number by employing the next-generation approach \cite{van2002reproduction}. We begin by defining the new infection term $\mathcal{F}$ and the transition term $\mathcal{V}$ in the following manner: 
$$
	\mathcal{F} = \beta I(S + \sigma V), \quad \mathcal{V} = (\mu + \gamma + d)I.
	$$
 Considering the infection-free equilibrium $E^0 = (S^0, V^0, 0, 0)$, we derive the corresponding Jacobian matrices (which reduce to scalars here due to a single infection variable $ I$). 
	$$
	F = \left.\frac{\partial \mathcal{F}}{\partial I}\right|_{E^0} = \beta(S^0 + \sigma V^0), \quad V = \left.\frac{\partial \mathcal{V}}{\partial I}\right|_{E^0} = \mu + \gamma + d.
	$$
	The basic reproduction number is then given by
	$$
	\mathcal{R}_0 = \rho(FV^{-1}) = \frac{\beta(S^0 + \sigma V^0)}{\mu + \gamma + d} = \frac{\Pi\beta(\mu+\eta+\sigma\iota)}{\mu(\mu+\iota+\eta)(\mu+\gamma+d)}.
$$	
The endemic equilibrium \( E^* \) has the form \( (S^*, V^*, I^*, R^*(a)) \) and satisfies:
	$$
	\begin{cases}
		\Pi - \beta I^*S^* - (\mu + \iota)S^* + \eta V^* + \int_{0}^{+\infty} \theta(a)R^*(a)da = 0, \\
		\iota S^* - \sigma \beta I^*V^* - (\mu + \eta)V^* = 0, \\
		\beta(S^* + \sigma V^*) - (\mu + \gamma + d) = 0, \\
		\frac{dR^*(a)}{da} = -(\mu + \theta(a))R^*(a), \\
		R^*(0) = \gamma I^*.
	\end{cases}
	$$
The third equation produces the relation:
	$$
	S^* + \sigma V^* = \frac{\mu + \gamma + d}{\beta}.
	$$
	From the second equation, we express $V^*$ in terms of $S^*$:
	$$
	V^* = \frac{\iota S^*}{\mu + \eta + \sigma \beta I^*}.
	$$
	Substituting into the expression for $S^* + \sigma V^*$ yields:
	$$
	S^* \left(1 + \frac{\sigma \iota}{\mu + \eta + \sigma \beta I^*}\right) = \frac{\mu + \gamma + d}{\beta},
	$$
	which gives:
	$$
	S^* = \frac{\mu + \gamma + d}{\beta} \cdot \frac{\mu + \eta + \sigma \beta I^*}{\mu + \eta + \sigma \beta I^* + \sigma \iota}.
	$$
	
	The solution to the recovery equation with boundary condition $R^*(0) = \gamma I^*$ is:
	$$
	R^*(a) = \gamma I^* e^{-\int_0^a (\mu + \theta(s)) ds}.
	$$
	Using the piecewise definition of $\theta(a)$, we evaluate the integral:
	$$
	\int_{0}^{a}(\mu+\theta(s))ds = \begin{cases}
		\mu a, & 0 < a < \tau, \\
		\mu \tau + (\mu + \vartheta^{*})(a - \tau), & a \geq \tau,
	\end{cases}
	$$
	resulting in:
	$$
	R^*(a) = \begin{cases}
		\gamma I^* e^{-\mu a}, & 0 < a < \tau, \\
		\gamma I^* e^{-\mu \tau - (\mu + \vartheta^{*})(a - \tau)}, & a \geq \tau.
	\end{cases}
	$$
	
	The integral term in the first equation becomes
	$$
	\int_{0}^{+\infty} \theta(a) R^*(a) da = \vartheta^* \gamma I^* e^{-\mu \tau} \int_{\tau}^{+\infty} e^{-(\mu + \vartheta^*)(a - \tau)} da = \frac{\vartheta^* \gamma I^* e^{-\mu \tau}}{\mu + \vartheta^*} := J(\tau)I^*,
	$$
where $	J(\tau)=\dfrac{\vartheta^* \gamma  e^{-\mu \tau}}{\mu + \vartheta^*}.$

	Plugging $S^*$, $V^*$, and $J(\tau)$ into the initial equation yields:  
	$$\begin{aligned}
			f(I^*) :=&\Pi - \frac{(\mu + \gamma + d)(\mu + \eta + \sigma \beta I^*)I^*}{\mu + \eta + \sigma \beta I^* + \sigma \iota} - \frac{(\mu + \iota)(\mu + \gamma + d)(\mu + \eta + \sigma \beta I^*)}{\beta(\mu + \eta + \sigma \beta I^* + \sigma \iota)} \\
			&+ \frac{\iota \eta (\mu + \gamma + d)}{\beta(\mu + \eta + \sigma \beta I^* + \sigma \iota)} + J(\tau) I^* = 0.
	\end{aligned}$$

	We analyze $f(I^*)$ at the boundaries:
	\begin{itemize}
		\item At $I^* = 0$:
		$$
		f(0) =\Pi \left(1 - \frac{1}{\mathcal{R}_0}\right) > 0 \quad \text{when} \quad \mathcal{R}_0 > 1.
		$$
		\item As $I^* \to +\infty$:
		$$
		f(I^*) \approx \Pi - \frac{(\mu + \iota)(\mu + \gamma + d)}{\beta} + [J(\tau) - (\mu + \gamma + d)]I^* \to -\infty,
		$$
		since $J(\tau) < \mu + \gamma + d$.
	\end{itemize}
	
By the Intermediate Value Theorem, there exists a unique $I^* > 0$ satisfying $f(I^*) = 0$, which ensures the existence of a unique endemic equilibrium
 $E^*$ when $\mathcal{R}_0 > 1$.
\end{proof}
\subsection{Linearization of the equation}
We now consider the linearization of system \eqref{1.2} about the equilibrium \( \bar{r}=(\bar{S},\bar{V},\bar{I},\bar{R}(a),0) \). Initially, through the implementation of the variable transformation \( \tilde{r}(t)=r(t)-\bar{r} \) (with \( \tilde{r}(t)=(\phi_1(t),\phi_2(t),\phi_3(t),\phi_4(a,t),0) \)), we obtain
\begin{equation}\label{3.2}
\begin{cases}
	\displaystyle\frac{\mathrm{d}\tilde{r}(t)}{\mathrm{d}t}=B\tilde{r}(t)+F(\tilde{r}(t)+\bar{r})-F(\bar{r}), & t>0,\\
	\tilde{r}(0)=r(0)-\bar{r}.
\end{cases}
\end{equation}
After routine computation, the linearized system of \eqref{3.2} can be obtained as below:
$$
\begin{cases}
	\displaystyle\frac{\mathrm{d}\tilde{r}(t)}{\mathrm{d}t} = B\tilde{r}(t) + DF(\bar{r})\tilde{r}(t), & t > 0, \\
	\tilde{r}(0) = r(0) - \bar{r},
\end{cases}
$$
where
$$\begin{aligned}
	& DF(\bar{r})
	\begin{pmatrix}
		\phi_1(t) \\
		\phi_2(t) \\
		\phi_3(t) \\
		\phi_4(a,t) \\
		0
	\end{pmatrix}=
	\begin{pmatrix}
		-\beta\bar{I}\phi_1(t)-\beta\bar{S}\phi_3(t)+\eta\phi_2(t)+\int_{0}^{+\infty}\theta(a)\phi_4(a,t)da\\
	\iota\phi_1(t)-\sigma\beta\bar{I}\phi_2(t)-\sigma\beta\bar{V}\phi_3(t) \\
		\beta\bar{I}\phi_1(t)+\sigma\beta\bar{I}\phi_2(t)+\beta(\bar{S}+\sigma\bar{V})\phi_3(t) \\
		0\\
		\gamma\phi_3(t)
	\end{pmatrix}.
\end{aligned}$$
We then reformulate the Cauchy problem \eqref{3.2} as
$$\frac{\mathrm{d}\tilde{r}(t)}{\mathrm{d}t}=\tilde{B}\tilde{r}(t)+\tilde{F}(\tilde{r}(t)), \quad t \geq 0,$$
 where \( \tilde{B}=B+DF(\bar{r}) \) is a linear operator, and  
$$
\tilde{F}(\tilde{r}(t))=F(\tilde{r}(t)+\bar{r})-F(\bar{r})-DF(\bar{r})\tilde{r}(t),
$$  
this expression defines a nonlinear operator \( \tilde{F} \) that satisfies \( \tilde{F}(0)=0 \) and \( D\tilde{F}(0)=0 \).  

 Since \( \omega_{0,\text{ess}}(B_0)\leq-\chi \) and the bounded linear operator \( DF(r) \) is found to be compact. Moreover, in accordance with the perturbation conclusions in \cite{DUCROT2008501}, the following result is obtained.
\newtheorem{proposition}{Proposition}[section]  
\begin{proposition}
	As a Hille-Yosida operator, $\tilde{B}$ satisfies $\omega_{0,\mathrm{ess}}(\tilde{B}_0) \leq -\chi$ for its part $\tilde{B}_0$ acting on the subspace $X_0$.
\end{proposition}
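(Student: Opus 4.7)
The plan is to leverage the Hille--Yosida property of $B$ established in Lemma 2.1 together with the structural simplicity of $DF(\bar{r})$, and then invoke a standard compact-perturbation result for the essential growth bound. Since $\tilde{B}=B+DF(\bar{r})$ differs from $B$ only by a bounded linear operator, the Hille--Yosida property of $\tilde{B}$ follows by a Neumann-series argument: writing $vI-\tilde{B}=(vI-B)\bigl(I-(vI-B)^{-1}DF(\bar{r})\bigr)$ and using the bound $\|(vI-B)^{-1}\|\le 1/(\operatorname{Re}(v)+\chi)$ from Lemma 2.1, invertibility and a uniform resolvent estimate on a suitable right half-plane follow, which is precisely the Hille--Yosida condition (after a harmless shift in the constant).

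The substantive part is the bound on the essential growth rate. First, I would extract from the resolvent estimate of Lemma 2.1 that the part $B_0$ generates a $C_0$-semigroup on $X_0$ whose growth bound (and therefore its essential growth bound) satisfies $\omega_{0,\mathrm{ess}}(B_0)\le-\chi$. Next, I would argue that $DF(\bar{r})$ is compact on $X_0$. Inspecting its explicit form, every output component is either a scalar (first, second, third, fifth slots) or identically zero (fourth slot); hence the range of $DF(\bar{r})$ is contained in $\mathbb{R}^{3}\times\{0\}\times\mathbb{R}$, which is four-dimensional. Therefore $DF(\bar{r})$ is of finite rank, and in particular compact. With these two ingredients in hand, the perturbation theorem from \cite{DUCROT2008501} (also used in the excerpt) applies: a compact perturbation of the generator preserves the inequality on the essential growth bound, giving $\omega_{0,\mathrm{ess}}(\tilde{B}_0)\le-\chi$.

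The main obstacle will be the bookkeeping needed to justify the finite-rank assertion rigorously, particularly to confirm that the scalar functional $\phi_4\mapsto\int_{0}^{+\infty}\theta(a)\phi_4(a)\,da$ appearing in the first component of $DF(\bar{r})$ is a bounded linear form on $L^{1}((0,+\infty),\mathbb{R})$. This is immediate from Assumption 1.1, which ensures $\theta\in L^{\infty}_{+}$, so the functional is controlled by $\vartheta^{*}\|\phi_4\|_{L^{1}}$. Once this observation is recorded, the remaining steps---invocation of Lemma 2.1, the Neumann-series argument for the Hille--Yosida bound, and the cited perturbation result for the essential type---are routine and together complete the proof.
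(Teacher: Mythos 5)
Your proposal is correct and follows essentially the same route as the paper: both rest on $\omega_{0,\mathrm{ess}}(B_0)\leq-\chi$ from the resolvent estimate of Lemma 2.1, the compactness of $DF(\bar{r})$, and the compact-perturbation theorem for the essential growth bound cited from \cite{DUCROT2008501}. Your write-up is in fact more complete than the paper's, since you justify the compactness via the finite-rank observation (range contained in $\mathbb{R}^{3}\times\{0\}\times\mathbb{R}$, with the $L^{1}$-boundedness of $\phi_4\mapsto\int_0^{+\infty}\theta(a)\phi_4(a)\,da$ coming from $\theta\in L^{\infty}_{+}$) and supply the Neumann-series argument for the Hille--Yosida property of $\tilde{B}$, both of which the paper merely asserts.
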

Combining the above discussion with the result of \cite{Engel2000}, we obtain two $C_0$-semigroups $\{T_{B_0}(t)\}_{t \geq 0}$ and $\{T_{\tilde{B}_0}(t)\}_{t \geq 0}$, which are generated by their respective infinitesimal generators $B_0$ and $\tilde{B}_0$ on the space $X_0$. In addition, we obtain $\|T_{\tilde B_{0}}(t)\|\leq e^{-\xi t}$.
Note that
$$DF(\bar{r})T_{{B}_0}(t):X_0\to X,$$
which possesses the property of compactness for all $t\geq0.$ Due to
$$\begin{aligned}T_{\tilde B_{0}}(t)&=e^{DF(\bar{r})t}T_{{B}_{0}}(t)\\&=T_{{B}_{0}}(t)+\sum_{k=1}^{+\infty}\frac{(DF(\bar{r})t)^{k}}{k!}T_{{B}_{0}}(t),\end{aligned}$$
namely, $\{T_{\tilde B_{0}}(t)\}_{t\geq0}$ is a quasi-compact $C_0$-semigroup. Consequently, it is worth pointing out that $ e^{\zeta t}\|T_{\tilde B_{0}}(t)\|\to0 $ as $ t\to+\infty $ for some $ \zeta>0 $ exactly when all eigenvalues of $ \tilde{B} $ have strictly negative real parts \cite{Martcheva2003}. Inspired by the foregoing arguments, we demonstrate the result as follows.
\begin{thm}For system \eqref{1.2}, the semi-flow $T(t,r_{0})$ characterized in Theorem 2.1 yields these theoretical consequences:\\
(i) The equilibrium \( \bar{r} \) is locally asymptotically stable if all eigenvalues of \( \tilde{B} \) exhibit negative real parts.  \\
(ii)  If at least one eigenvalue of $\tilde{B}$ has a positive real part, then the equilibrium $\bar{r}$ is unstable.
\end{thm}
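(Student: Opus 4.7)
The plan is to reduce both parts to classical linearised stability arguments for non-densely defined Cauchy problems, exploiting the structural facts that have already been assembled in the excerpt: $\tilde{B}$ is Hille--Yosida, its part $\tilde{B}_0$ generates a quasi-compact $C_0$-semigroup $\{T_{\tilde{B}_0}(t)\}_{t\ge 0}$ on $X_0$ with $\omega_{0,\mathrm{ess}}(\tilde{B}_0)\le -\chi$, and the nonlinearity satisfies $\tilde{F}(0)=0$, $D\tilde{F}(0)=0$. The whole argument is carried out for the translated variable $\tilde{r}(t)=r(t)-\bar{r}$, for which the equilibrium $\bar r$ becomes the trivial one.

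For part (i), I would first translate the spectral hypothesis into a decay estimate for the linear semigroup. Because $\{T_{\tilde{B}_0}(t)\}_{t\ge 0}$ is quasi-compact with $\omega_{0,\mathrm{ess}}(\tilde{B}_0)\le -\chi<0$, the growth bound $\omega_0(\tilde{B}_0)$ coincides with the spectral bound $s(\tilde{B}_0)=\max\{-\chi,\ \sup\{\mathrm{Re}\,\lambda:\lambda\in\sigma_p(\tilde B)\}\}$. If every eigenvalue of $\tilde B$ has negative real part, we therefore obtain constants $M\ge 1$ and $\delta>0$ with $\|T_{\tilde{B}_0}(t)\|\le M e^{-\delta t}$. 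Next, using $\tilde{F}(0)=0$ and $D\tilde{F}(0)=0$, for every $\varepsilon>0$ there is $\rho>0$ such that $\|\tilde F(x)\|\le \varepsilon\|x\|$ whenever $\|x\|\le \rho$. Writing the integrated solution in the Magal--Ruan sense,
\begin{equation*}
\tilde r(t)=T_{\tilde B_0}(t)\tilde r(0)+\lim_{\lambda\to+\infty}\int_0^{t}T_{\tilde B_0}(t-s)\lambda(\lambda I-\tilde B)^{-1}\tilde F(\tilde r(s))\,ds,
\end{equation*}
and applying the Hille--Yosida bound $\|\lambda(\lambda-\tilde B)^{-1}\|\le \lambda/(\lambda-\|\tilde B\|_{\text{resolvent type}})$ together with the exponential bound above, one obtains an inequality of the form $\|\tilde r(t)\|\le M e^{-\delta t}\|\tilde r(0)\|+M\varepsilon\int_0^{t}e^{-\delta(t-s)}\|\tilde r(s)\|\,ds$. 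Choosing $\varepsilon<\delta/M$ and invoking Gronwall's inequality yields $\|\tilde r(t)\|\le M\|\tilde r(0)\|e^{-(\delta-M\varepsilon)t}$, so $\bar r$ is locally exponentially (hence asymptotically) stable.

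For part (ii), I would use the spectral decomposition afforded by quasi-compactness. Because $\omega_{0,\mathrm{ess}}(\tilde B_0)\le -\chi$, the portion $\sigma_+(\tilde B):=\sigma(\tilde B)\cap\{\mathrm{Re}\,\lambda\ge 0\}$ consists of finitely many eigenvalues of finite algebraic multiplicity. The associated spectral projector $\Pi_+$ commutes with $T_{\tilde B_0}(t)$ and decomposes $X_0=X_+\oplus X_-$ with $\dim X_+<\infty$ and $T_{\tilde B_0}(t)|_{X_+}$ exponentially expanding for some $\delta'>0$. Applying the local invariant-manifold theorem for non-densely defined Cauchy problems of Magal--Ruan (the framework already invoked in the excerpt) produces a local unstable manifold tangent to $X_+$ at $\bar r$ on which solutions move away from the equilibrium at exponential rate, so $\bar r$ cannot be stable. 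Alternatively, a direct argument decomposing $\tilde r(t)=\Pi_+\tilde r(t)+(I-\Pi_+)\tilde r(t)$ and iterating the variation-of-constants formula shows that any trajectory started with $\Pi_+\tilde r(0)\neq 0$ and sufficiently small norm must leave a neighbourhood of $0$.

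The main obstacle is the non-densely defined character of $\tilde B$: the solution is only an integrated solution and $\tilde F$ does not land in $\overline{D(\tilde B)}$, so the usual mild-formulation Gronwall estimate requires the integrated-semigroup machinery of \cite{center,Magal2018intro} and the quasi-compactness statement recorded in the proposition preceding the theorem. Once these tools are in hand, the spectral-decay and spectral-projection steps proceed as in the classical densely defined case, and no delicate new computation is needed beyond carefully tracking constants in the Hille--Yosida estimate.
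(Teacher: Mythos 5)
Your proposal is correct and follows essentially the same route as the paper: the paper states this theorem as a direct consequence of the preceding discussion (quasi-compactness of $\{T_{\tilde B_0}(t)\}_{t\ge 0}$, $\omega_{0,\mathrm{ess}}(\tilde B_0)\le-\chi$, and the cited linearized-stability results for non-densely defined Cauchy problems), and your variation-of-constants/Gronwall argument for (i) and spectral-projection/unstable-manifold argument for (ii) are exactly the standard proofs behind those cited results, just written out explicitly.
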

\section{ The Stability of Equilibria}
\subsection{ Local stability of the disease-free equilibrium}
We now analyze the local stability properties of the disease-free equilibrium $E^0=(S^0,V^0,0,0).$
\begin{thm}
	Provided that $\mathcal{R}_{0}<1$, the disease-free equilibrium $E^{0}$ is locally asymptotically stable; it becomes unstable when $\mathcal{R}_{0}>1.$
\end{thm}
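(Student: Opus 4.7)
The plan is to apply Theorem 3.4 by computing all eigenvalues of the linearized operator $\tilde{B}$ at $E^0$ and checking the signs of their real parts. First, I would write out the eigenvalue problem $\tilde{B}\phi=\lambda\phi$ componentwise for $\phi\in D(B)=\mathbb{R}^{3}\times W^{1,1}((0,+\infty),\mathbb{R})\times\{0\}$. Since $\bar{I}=0$ and $\bar{R}\equiv 0$ at the disease-free equilibrium, the perturbation $DF(E^{0})$ simplifies considerably; in particular, the fifth-row equation, together with the built-in constraint $\phi_{5}=0$, reduces to the boundary condition $\phi_{4}(0)=\gamma\phi_{3}$ rather than an independent algebraic relation.

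Next, solving the fourth-row ODE $\phi_{4}'(a)=-(\lambda+\mu+\theta(a))\phi_{4}(a)$ under this boundary condition yields
\begin{equation*}
\phi_{4}(a)=\gamma\phi_{3}\,e^{-\lambda a}\pi(a),\qquad \pi(a):=\exp\!\Bigl(-\int_{0}^{a}(\mu+\theta(s))\,ds\Bigr).
\end{equation*}
Substituting into the first-row equation and writing $K(\lambda):=\int_{0}^{+\infty}\theta(a)e^{-\lambda a}\pi(a)\,da$, the remaining three equations form a homogeneous linear system for $(\phi_{1},\phi_{2},\phi_{3})$ whose coefficient matrix has third row $\bigl(0,\,0,\,\lambda+\mu+\gamma+d-\beta(S^{0}+\sigma V^{0})\bigr)$. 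Expanding the determinant along that row (so that the transcendental term $K(\lambda)$ never enters) and using the identity $(\lambda+\mu+\iota)(\lambda+\mu+\eta)-\iota\eta=(\lambda+\mu)(\lambda+\mu+\iota+\eta)$, the characteristic equation factors as
\begin{equation*}
(\lambda+\mu)(\lambda+\mu+\iota+\eta)\bigl[\lambda+\mu+\gamma+d-\beta(S^{0}+\sigma V^{0})\bigr]=0.
\end{equation*}

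Using the already-established relation $\beta(S^{0}+\sigma V^{0})=(\mu+\gamma+d)\mathcal{R}_{0}$, the eigenvalues are $\lambda=-\mu$, $\lambda=-(\mu+\iota+\eta)$, and $\lambda^{*}=(\mu+\gamma+d)(\mathcal{R}_{0}-1)$. The first two are always strictly negative, while $\lambda^{*}<0$ precisely when $\mathcal{R}_{0}<1$ and $\lambda^{*}>0$ precisely when $\mathcal{R}_{0}>1$. Theorem 3.4(i) then yields local asymptotic stability of $E^{0}$ when $\mathcal{R}_{0}<1$, and Theorem 3.4(ii) yields instability when $\mathcal{R}_{0}>1$.

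The main subtlety, rather than an analytic obstacle, is the bookkeeping forced by the non-densely defined setting: one must correctly convert the fifth component of the eigenvalue equation into the boundary condition for $\phi_{4}$, remembering that $\phi_{5}=0$ is baked into $D(B)$ so that the fifth row supplies boundary data rather than an algebraic equation on a free coordinate. Once this reduction is made, the proof is essentially polynomial because the infection channel is decoupled from $(S,V,R)$ at the linearized level at $E^{0}$, which is exactly what makes $K(\lambda)$ drop out of the determinant.
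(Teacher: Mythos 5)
Your proposal is correct and follows essentially the same route as the paper: the paper substitutes the exponential ansatz $e^{\lambda t}$ into the linearized system at $E^{0}$, solves the age equation to get $\bar h(a)=\gamma\bar z\,e^{-\int_{0}^{a}(\mu+\theta(s)+\lambda)ds}$, and factors the resulting characteristic determinant as $\bigl[\beta(S^{0}+\sigma V^{0})-(\mu+\gamma+d)-\lambda\bigr]\bigl[(\lambda+\mu+\iota)(\lambda+\mu+\eta)-\iota\eta\bigr]=0$, obtaining exactly your three eigenvalues $-\mu$, $-(\mu+\iota+\eta)$, and $(\mu+\gamma+d)(\mathcal{R}_{0}-1)$. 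Your operator-theoretic phrasing of the same computation (and your explicit remark that the kernel term drops out because the infected equation decouples) is equivalent; the only slip is that the stability-via-eigenvalues result you invoke is the paper's Theorem~3.3, not 3.4.
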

\begin{proof}
By applying linearization methods, we obtain an approximate representation of the system near $E^{0} = (S^{0}, V^{0}, 0, 0)$. 
	Define  
	$$
	S(t) = S^{0} + x(t), \quad V(t) = V^{0} + y(t), \quad I(t) = z(t), \quad R(a,t) =h(a,t),
	$$  
	with \( x(t), y(t), z(t),h(a,t) \) representing small perturbations. The linearized system is expressed as follows:
\begin{equation}\label{4.1}
	\begin{cases}  
		\dfrac{dx(t)}{dt} = -\beta S^{0} z(t) - (\mu + \iota) x(t) + \eta y(t) + \int_{0}^{+\infty} \theta(a) h(a,t) da, \\  
		\dfrac{dy(t)}{dt} = \iota x(t) - \sigma \beta V^{0} z(t) - (\mu + \eta) y(t), \\  
		\dfrac{dz(t)}{dt} = \beta (S^{0} + \sigma V^{0}) z(t) - (\mu + \gamma + d) z(t), \\  
		\dfrac{\partial h(a,t)}{\partial a} + \dfrac{\partial h(a,t)}{\partial t} = -(\mu + \theta(a)) h(a,t), \\  
		h(0,t) = \gamma z(t).  
	\end{cases} 
\end{equation}	
To conduct a stability analysis, we assume exponential solutions for the configuration:		
	$$
	x(t) = \bar{x} e^{\lambda t}, \quad y(t) = \bar{y} e^{\lambda t}, \quad z(t) = \bar{z} e^{\lambda t}, \quad h(a,t) = \bar{h}(a) e^{\lambda t}.
	$$  
	Substituting these into \eqref{4.1} yields the eigenvalue problem:  
\begin{equation}\label{4.2}
	\begin{cases}  
		(\lambda + \mu + \iota) \bar{x} = -\beta S^{0} \bar{z} + \eta \bar{y} + \int_{0}^{+\infty} \theta(a) \bar{h}(a) da, \\  
		(\lambda + \mu + \eta) \bar{y} = \iota \bar{x} - \sigma \beta V^{0} \bar{z}, \\  
		(\lambda + \mu + \gamma + d) \bar{z} = \beta (S^{0} + \sigma V^{0}) \bar{z}, \\  
		\dfrac{d \bar{h}(a)}{da} = -(\lambda + \mu + \theta(a)) \bar{h}(a), \\  
		\bar{h}(0) = \gamma \bar{z}.  
	\end{cases} 
\end{equation}	
	
	Solving the fourth equation gives:  
	$$
	\bar{h}(a) = \bar{h}(0) e^{-\int_{0}^{a} (\mu + \theta(s) + \lambda) ds} = \gamma \bar{z} e^{-\int_{0}^{a} (\mu + \theta(s) + \lambda) ds}.
	$$  
	The integral term simplifies to:  
	$$
	\int_{0}^{+\infty} \theta(a) \bar{h}(a) da = \gamma \bar{z} \int_{0}^{+\infty} \theta(a) e^{-\int_{0}^{a} (\mu + \theta(s) + \lambda) ds} da = J(\lambda, \tau) \bar{z},
	$$  
	where $J(\lambda, \tau) = \dfrac{\gamma \vartheta^{*} e^{-(\lambda + \mu)\tau}}{\lambda + \mu + \vartheta^{*}}$.  
	
	Substituting back into \eqref{4.2}, the system reduces to:  
$$
	\begin{cases}  
		(\lambda + \mu + \iota) \bar{x} = -\beta S^{0} \bar{z} + \eta \bar{y} + J(\lambda, \tau) \bar{z}, \\  
		(\lambda + \mu + \eta) \bar{y} = \iota \bar{x} - \sigma \beta V^{0} \bar{z}, \\  
		(\lambda + \mu + \gamma + d) \bar{z} = \beta (S^{0} + \sigma V^{0}) \bar{z}.  
	\end{cases}
$$
	A characteristic equation emerges as the determinant of the coefficient matrix is calculated:  
\begin{equation}\label{4.3}
	g_0(\lambda) = g_{01}(\lambda) g_{02}(\lambda) = 0, 
\end{equation}
	where  
	$$\begin{aligned}
			&g_{01}(\lambda) = \beta (S^{0} + \sigma V^{0}) - (\mu + \gamma + d) - \lambda,\\
			&g_{02}(\lambda) = (\lambda + \mu + \iota)(\lambda + \mu + \eta) - \iota \eta.
	\end{aligned}$$
Considering the equation $g_{01}(\lambda)=0$, we get $\lambda_{01}=\beta(S^0+\sigma V^0)-(\mu+\gamma+d)$ and $\beta(S^0+\sigma V^0)-(\mu+\gamma+d)=(\mu+\gamma+d)(\mathcal{R}_0 -1)$. When $\mathcal{R}_0 <1$, $\lambda_{01}$ is a negative solution to Eq.~\eqref{4.3}. Consequently, the roots of this equation 
$$g_{02}(\lambda)=\lambda^2+(\iota+2\mu+\eta)\lambda+\mu(\mu+\iota+\eta)=0$$  
determine the stability of $E^0$. 

Solving this quadratic equation yields
$$\lambda_{02}=-\mu,\qquad \lambda_{03}=-\mu-\iota-\eta.$$
 Therefore, each solution to Eq.~\eqref{4.3} has a negative real part, a result that verifies \( E^0 \) is locally asymptotically stable. Conversely, when the threshold $\mathcal{R}_0 > 1$ is exceeded, we infer that $\lambda_{01}>0$. Consequently, $E^0$ grows unstable when $\mathcal{R}_0 > 1$, which concludes this proof. 
\end{proof}
\subsection{Local stability of the endemic equilibrium}
To start with, we perform a variable transformation as follows:
$$S(t) = S^* +x(t), \quad V(t) =V^*+y(t),\quad I(t) =I^*+z(t), \quad R(a,t) = R^*(a)+h(a,t).$$
Then we introduce the linearized part of the system 
\begin{equation}\label{4.4}
\begin{cases} \dfrac{dx(t)}{dt}=-\beta S^*z(t)-\beta I^* x(t)+\eta y(t)+\int_{0}^{+\infty} \theta(a) h(a,t)da,\\ 
	\dfrac{dy(t)}{dt}=\iota x(t)-\sigma\beta (V^*z(t)+I^* y(t))-(\mu+\eta)y(t),\\
	\dfrac{dz(t)}{dt}=\beta(S^* z(t)+I^* x(t))+\sigma\beta(V^* z(t)+I^* y(t))-(\mu+\gamma+d)z(t),\\	
	\dfrac{\partial{h}(a,t)}{\partial a}+\dfrac{\partial{h}(a,t)}{\partial t}=-(\mu+\theta(a))h(a,t),\\
	h(0,t)=\gamma z(t).
\end{cases}
\end{equation}
Consider the following nonzero exponential solution for the system \eqref{4.4}:	
\begin{align*}
	x(t)&= \bar{x} e^{\lambda t},\qquad y(t)=\bar{y} e^{\lambda t},\\
	z(t)&= \bar{z}e^{\lambda t},\qquad h(a,t)=\bar{h}(a) e^{\lambda t}.
\end{align*}
That is, $\bar{x}$, $\bar{y}$, $\bar{z}$, $\bar{h}(a)$, and $\lambda$ are related through the following equations:
\begin{equation}\label{4.5}
\begin{cases} 
	(\lambda+\mu+\alpha)\bar{x}=-\beta S^*\bar{z}-\beta I^*\bar{x}+\eta \bar{y}+\int_{0}^{+\infty} \theta(a)\bar{h}(a)da,\\
	(\lambda+\mu+\eta)\bar{y}=\iota\bar{x}-\sigma\beta(V^*\bar{z}+I^*\bar{y}),\\
	(\lambda+\mu+\gamma+d)\bar{z}=\beta(S^*\bar{z}+I^*\bar{x})+\sigma\beta(V^*\bar{z}+I^*\bar{y}),\\
	\dfrac{d\bar{h}(a)}{da}=-(\lambda+\mu+\theta(a))\bar{h}(a),\\
	\bar{h}(0)=\gamma \bar{z}.
\end{cases}
\end{equation}
By direct calculation, we obtain
$$\int_{0}^{+\infty} \theta(a)\bar{h}(a)da=\gamma\bar{z}\int_{0}^{+\infty} \theta(a)e^{-\int_{0}^{a}(\mu+\theta(s)+\lambda)ds}da=J(\lambda,\tau)\bar{z},$$
where $J(\lambda,\tau)=\dfrac{\gamma\vartheta^{*} e^{-(\lambda+\mu)\tau}}{\lambda+\mu+\vartheta^{*}}.$

So Eq.~\eqref{4.5} can be transformed into
\begin{equation}\label{4.6}
\begin{cases} 
	(\lambda+\mu+\iota)\bar{x}=-\beta S^*\bar{z}-\beta I^*\bar{x}+\eta \bar{y}+J(\lambda,\tau)\bar{z},\\
	(\lambda+\mu+\eta)\bar{y}=\iota\bar{x}-\sigma\beta(V^*\bar{z}+I^*\bar{y}),\\
	(\lambda+\mu+\gamma+d)\bar{z}=\beta(S^*\bar{z}+I^*\bar{x})+\sigma\beta(V^*\bar{z}+I^*\bar{y}).\\
\end{cases}
\end{equation}
Substituting these expressions into system \eqref{4.6}, we can get the characteristic equation
$$
M(\lambda,\tau)=\frac{\lambda^4+A_3\lambda^3+A_2\lambda^2+A_1\lambda+A_0+(G_1(\tau)\lambda +G_0(\tau))e^{-\lambda\tau}}{(\lambda+\mu+\iota)(\lambda+\mu+\eta)(\lambda+\mu+\gamma+d)(\lambda+\mu+\vartheta^{*})}:=\frac{m(\lambda,\tau)}{y(\lambda)},
$$
where
$$\begin{aligned}
	A_3=&[(\mu+\iota)+(\mu+\eta)+(\mu+\gamma+d)+(\mu+\vartheta^*)]-\beta(\bar{S}+\sigma\bar{V})+\sigma\beta\bar{I}+\beta\bar{I},\\
	A_2=&[(\mu+\eta)+(\mu+\gamma+d)+(\mu+\vartheta^*)]+[(\mu+\iota)(\mu+\eta)+(\mu+\iota)(\mu+\gamma+d)+(\mu+\iota)(\mu+\vartheta^*)\\
	&+(\mu+\eta)(\mu+\gamma+d)+(\mu+\eta)(\mu+\vartheta^*)+(\mu+\gamma+d)(\mu+\vartheta^*)]+\sigma^2\beta^2\bar{I}\bar{V}+\sigma\beta^2\bar{I}^2+\beta^2\bar{S}^2\\
	&+\sigma\beta\bar{I}[(\mu+\iota)+(\mu+\gamma+d)+(\mu+\vartheta^*)]-\beta(\bar{S}+\sigma\bar{V})[(\mu+\iota)+(\mu+\eta)+(\mu+\vartheta^*)]-\iota\eta\\
	&-\sigma\beta^2\bar{I}(\bar{S}+\sigma\bar{V})-\beta^2\bar{I}(\bar{S}+\sigma\bar{V}),\\
	A_1=&[(\mu+\iota)(\mu+\eta)(\mu+\gamma+d)+(\mu+\iota)(\mu+\gamma+d)(\mu+\vartheta^*)+(\mu+\eta)(\mu+\gamma+d)(\mu+\vartheta^*)\\
	&+(\mu+\iota)(\mu+\eta)(\mu+\vartheta^*)]+\beta\bar{I}[(\mu+\eta)(\mu+\gamma+d)+(\mu+\eta)(\mu+\vartheta^*)+(\mu+\gamma+d)(\mu+\vartheta^*)]\\
	&+\sigma\beta\bar{I}[(\mu+\iota)(\mu+\gamma+d)+(\mu+\iota)(\mu+\vartheta^*)+(\mu+\gamma+d)(\mu+\vartheta^*)]+\sigma^2\beta^3\bar{I}^2\bar{V}+\sigma\iota\beta^2\bar{I}\bar{S}\\
	&+\sigma^2\beta^2\bar{I}\bar{V}[(\mu+\iota)+(\mu+\vartheta^*)]+\sigma\beta^2\bar{I}^2[(\mu+\gamma+d)+(\mu+\vartheta^*)]+\beta^2\bar{S}^2[(\mu+\eta)+(\mu+\vartheta^*)]\\
	&+\iota\beta\eta(\bar{S}+\sigma\bar{V})+\sigma\eta\beta^2\bar{S}\bar{V}+\sigma\beta^3\bar{S}^2\bar{I}-\sigma\beta^2\bar{I}(\bar{S}+\sigma\bar{V})[(\mu+\iota)+(\mu+\vartheta^*)]-\sigma\beta^3\bar{I}^2(\bar{S}+\sigma\bar{V})\\
	&-\beta(\bar{S}+\sigma\bar{V})[(\mu+\iota)(\mu+\eta)+(\mu+\iota)(\mu+\vartheta^*)+(\mu+\eta)(\mu+\vartheta^*)]-\iota\eta[(\mu+\gamma+d)+(\mu+\vartheta^*)]\\
	&-\beta^2\bar{I}(\bar{S}+\sigma\bar{V})[(\mu+\eta)+(\mu+\vartheta^*)],
\end{aligned}$$
$$\begin{aligned}
	A_0=&(\mu+\iota)(\mu+\eta)(\mu+\gamma+d)(\mu+\vartheta^*)+\sigma\beta\bar{I}(\mu+\iota)(\mu+\gamma+d)(\mu+\vartheta^*)+\sigma^2\beta^2\bar{I}\bar{V}(\mu+\iota)(\mu+\vartheta^*)\\
&+\sigma\beta^2\bar{I}^2(\mu+\gamma+d)(\mu+\vartheta^*)+\iota\beta\eta(\bar{S}+\sigma\bar{V})(\mu+\vartheta^*)+\sigma^2\beta^3\bar{I}^2\bar{V}(\mu+\vartheta^*)+\sigma\iota\beta^2\bar{I}\bar{S}(\mu+\vartheta^*)\\
&+(\mu+\eta)(\mu+\gamma+d)(\mu+\vartheta^*)+\beta^2\bar{S}^2(\mu+\eta)(\mu+\vartheta^*)+\sigma\eta\beta^2\bar{S}\bar{V}(\mu+\vartheta^*)+\sigma\beta^3\bar{S}^2\bar{I}(\mu+\vartheta^*)\\
&-\beta(\bar{S}+\sigma\bar{V})(\mu+\iota)(\mu+\eta)(\mu+\vartheta^*)-\sigma\beta^2\bar{I}(\bar{S}+\sigma\bar{V})(\mu+\iota)(\mu+\vartheta^*)-\sigma\beta^3\bar{I}^2(\bar{S}+\sigma\bar{V})(\mu+\vartheta^*)\\
&-\beta^2\bar{I}(\bar{S}+\sigma\bar{V})(\mu+\eta)(\mu+\vartheta^*)-\iota\eta(\mu+\gamma+d)(\mu+\vartheta^*),\\
G_1(\tau)=&-\beta\gamma\vartheta^*\bar{I}e^{-\mu\tau},\\
G_0(\tau)=&-[(\mu+\eta)\beta\gamma\vartheta^*\bar{I}+\sigma\iota\beta\gamma\vartheta^*\bar{I}+\sigma\beta^2\gamma\vartheta^*\bar{I}^2]e^{-\mu\tau}.
\end{aligned}$$
Clearly,
$$\{\lambda\in\Omega:M(\lambda,\tau)=0\}=\{\lambda\in\Omega:m(\lambda,\tau)=0\}.$$ 
When $\tau=0$, we see
\begin{equation}\label{4.7}
m(\lambda,0)=\lambda^{4}+A_{3}\lambda^{3}+A_{2}\lambda^{2}+(A_{1}+G_{1}(0))\lambda+(A_{0}+G_{0}(0))=0.
\end{equation}
Based on the Routh–Hurwitz criterion, it can be concluded that all roots of \eqref{4.7} have negative real parts if the following conditions hold:
\begin{equation}\begin{cases} 	A_3>0,\quad A_2>0,\quad A_1+G_1(0)>0,\quad A_0+G_0(0)>0,\\	A_3A_2>A_1+G_1(0),\\	[A_3A_2-(A_1+G_1(0))](A_1+G_1(0))>A_3^2(A_0+G_0(0)).\\\end{cases}\tag{H}\label{eq:H}\end{equation} 
Consequently, the following conclusion is drawn.
  \begin{thm}
  Given that \( \mathcal{R}_0 > 1 \), \( J(\tau) < \mu + \gamma + d \), and conditions \eqref{eq:H} are fulfilled, the endemic equilibrium \( E^* = (S^*, V^*, I^*, R^*(a)) \) of system \eqref{1.2}  is locally asymptotically stable when \( \tau = 0 \).
  \end{thm}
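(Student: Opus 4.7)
The plan is to reduce local asymptotic stability of $E^*$ at $\tau=0$ to the classical Routh--Hurwitz test on the quartic characteristic polynomial \eqref{4.7}, using Theorem 3.3(i) as the bridge between the spectrum of $\tilde{B}$ and stability.

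First, I would observe that under $\mathcal{R}_0>1$ and $J(\tau)<\mu+\gamma+d$ the endemic equilibrium $E^*$ exists by Theorem 3.2, so the statement is not vacuous. By Theorem 3.3(i) local asymptotic stability follows once every eigenvalue of the linearization $\tilde{B}$ at $E^*$ has negative real part. Because the denominator $y(\lambda)=(\lambda+\mu+\iota)(\lambda+\mu+\eta)(\lambda+\mu+\gamma+d)(\lambda+\mu+\vartheta^{*})$ vanishes only at points with real parts at most $-\chi<0$, the eigenvalues lying in $\Omega$ coincide with the zeros of $m(\lambda,\tau)$ in $\Omega$, and any accidental cancellation with $y(\lambda)$ can only remove roots rather than introduce ones with positive real part. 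Hence it suffices to prove that every root of $m(\lambda,0)=0$ has negative real part.

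Second, I would set $\tau=0$, so that $e^{-\lambda\tau}\equiv 1$ and $m(\lambda,0)$ collapses to the quartic \eqref{4.7}:
$$\lambda^{4}+A_{3}\lambda^{3}+A_{2}\lambda^{2}+\bigl(A_{1}+G_{1}(0)\bigr)\lambda+\bigl(A_{0}+G_{0}(0)\bigr)=0.$$
Applying the standard Routh--Hurwitz criterion for a quartic $\lambda^{4}+b_{3}\lambda^{3}+b_{2}\lambda^{2}+b_{1}\lambda+b_{0}$, every root lies in the open left half-plane if and only if $b_{3},b_{2},b_{1},b_{0}>0$, $b_{3}b_{2}>b_{1}$, and $(b_{3}b_{2}-b_{1})b_{1}>b_{3}^{2}b_{0}$. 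Identifying $b_{3}=A_{3}$, $b_{2}=A_{2}$, $b_{1}=A_{1}+G_{1}(0)$, $b_{0}=A_{0}+G_{0}(0)$ recovers hypothesis \eqref{eq:H} line by line. Thus under \eqref{eq:H} every root of \eqref{4.7} has negative real part, and combining with Theorem 3.3(i) yields local asymptotic stability of $E^*$.

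The conceptual content is therefore minimal. The principal obstacle I anticipate is algebraic rather than analytic: careful bookkeeping to confirm that the coefficients $A_{0},\ldots,A_{3}$ and $G_{0}(0),G_{1}(0)$ obtained by computing the determinant of the system \eqref{4.6} at $\tau=0$ coincide with the lengthy expressions displayed just before \eqref{4.7}, so that \eqref{eq:H} matches the Routh--Hurwitz conditions for the specific polynomial in question. Once that identification is verified, the proof is essentially a one-line appeal to the Routh--Hurwitz theorem composed with Theorem 3.3(i).
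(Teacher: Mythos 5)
Your proposal is correct and follows essentially the same route as the paper: the paper likewise notes that the zeros of $M(\lambda,\tau)$ in $\Omega$ coincide with those of $m(\lambda,\tau)$, specializes to $\tau=0$ to obtain the quartic \eqref{4.7}, and invokes the Routh--Hurwitz criterion with conditions \eqref{eq:H} together with Theorem 3.3(i). The only difference is that you make explicit the (correct) observation that the poles of $y(\lambda)$ all lie in the left half-plane, which the paper leaves implicit.
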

 \section{Hopf Bifurcation}
Throughout this section, we assume that \( \mathcal{R}_0 > 1 \), \( J(\tau) < \mu + \gamma + d \), and that conditions~\eqref{eq:H} are satisfied. Under these assumptions, we examine the dynamical consequences of immune system breakdown and investigate the resulting behavior of system~\eqref{1.2} for \( \tau > 0 \).

  When $\tau>0$,
  \begin{equation}\label{5.1}
  m(\lambda,\tau)=\lambda^4+A_3\lambda^3+A_2\lambda^2+A_1\lambda+A_0+(G_1(\tau)\lambda +G_0(\tau))e^{-\lambda\tau}=0.
   \end{equation}
   Let $\lambda = i v$ with $v>0$ be a purely imaginary root of \eqref{5.1}.  Subsequently, we acquire
   $$v^4-iA_3 v^3-A_2 v^2+i A_1v+A_0+(iG_1(\tau)v+G_0(\tau))(\cos(\tau v)-i\sin(\tau v))=0.$$
    Separating the real components from the imaginary components provides
    \begin{equation}\label{5.2}
   \begin{cases} 
    v^4-A_2 v^2+A_0=-G_1(\tau)v\sin(\tau v)-G_0(\tau)\cos(\tau v),\\
    -A_3 v^3+A_1 v=-G_1(\tau)v\cos(\tau v)+G_0(\tau)\sin(\tau v).
    \end{cases}
     \end{equation}
     Squaring and adding both equations in \eqref{5.2} yields:
  \begin{equation}\label{5.3}
 v^8+(A_3^2-2A_2)v^6+(2A_0+A_2^2-2A_1A_3)v^4+(A_1^2-2A_0 A_2-G_1^2(\tau))v^2+A_0^2-G_0^2(\tau)=0.
  \end{equation}
     Let $w=v^2$, and denote $C_1=A_3^2-2A_2$, $C_2=2A_0+A_2^2-2A_1A_3$, $C_3=A_1^2-2A_0 A_2-G_1^2(\tau)$, $C_4=A_0^2-G_0^2(\tau)$. Then Eq.~\eqref{5.3} becomes
     \begin{equation}\label{5.4}
     	F(w):=w^4+C_1 w^3+C_2 w^2+C_3 w+C_4=0.
 \end{equation}     	
 \begin{lemma}
 	From $C_4<0$, it follows that Eq.~\eqref{5.4} must have one or more positive roots.
 \end{lemma}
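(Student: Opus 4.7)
The plan is to apply the Intermediate Value Theorem to the polynomial $F(w)=w^4+C_1w^3+C_2w^2+C_3w+C_4$ on the interval $(0,+\infty)$. Since $F$ is a polynomial, it is continuous on $[0,+\infty)$, so it suffices to exhibit one point at which $F$ is negative and one at which $F$ is positive; the existence of a positive root then follows immediately.

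First I would evaluate $F$ at the left endpoint: $F(0)=C_4$, and by the hypothesis $C_4<0$ we have $F(0)<0$. Next, since the leading term $w^4$ dominates for large $w$, the behavior at infinity gives $\lim_{w\to+\infty}F(w)=+\infty$. Hence there exists some $W>0$ with $F(W)>0$. Continuity of $F$ on $[0,W]$, together with $F(0)<0<F(W)$, yields by the Intermediate Value Theorem a point $w_*\in(0,W)$ at which $F(w_*)=0$, i.e.\ a positive root of Eq.~\eqref{5.4}.

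There is essentially no obstacle here; the argument is a one-line sign analysis of a quartic with a negative constant term and a positive leading coefficient. The only thing worth remarking on for the sake of the later bifurcation analysis is that this argument only guarantees \emph{at least one} positive root, not uniqueness or multiplicity; depending on the signs of $C_1,C_2,C_3$ there could be up to three further positive real roots, which is exactly why the statement is phrased as ``one or more.'' If desired, I would also note that this yields a valid frequency $v_*=\sqrt{w_*}>0$ that can be back-substituted into system~\eqref{5.2} to solve for the corresponding critical delay $\tau$, setting up the Hopf bifurcation argument that follows.
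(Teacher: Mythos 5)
Your argument is correct and is essentially identical to the paper's own proof: both evaluate $F(0)=C_4<0$, note that $F(w)\to+\infty$ as $w\to+\infty$, and conclude by the Intermediate Value Theorem. The additional remarks on non-uniqueness and back-substitution are accurate but not needed for the lemma itself.
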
    
 \begin{proof}
 Clearly, $F(0)=C_4<0$, and $\lim_{w\to +\infty} F(w)=+\infty$. Therefore, there exists a $w_0 \in (0, +\infty)$ such that $F(w_0) = 0$. This concludes the proof.
 \end{proof}  
From the definition of $F(w)$, we have $F'(w)=4w^3+3C_1w^2+2C_2w+C_3$.\\ 
Set
\begin{equation}\label{5.5}
	4w^3+3C_1w^2+2C_2w+C_3=0.
\end{equation}
Let $w=h-\frac{3}{4}C_1$. Then Eq.~\eqref{5.5} becomes 
  $$
  	h^3+D_1 h+D_2=0,
   $$
 where
 $$D_1=\frac{1}{2}C_2-\frac{3}{16}C_1^2 ,\qquad D_2=\frac{1}{32} C_1^3-\frac{1}{8}C_1C_2+\frac{1}{4}C_3.$$  
 Define
 $$ \begin{aligned}
  	&D=\left(\frac{D_{2}}{2}\right)^{2}+\left(\frac{D_{1}}{3}\right)^{3},\quad p=\frac{-1+\sqrt{3}\mathrm{i}}{2}, \\
  	& h_{1}=\sqrt[3]{-\frac{D_{2}}{2}+\sqrt{D}}+\sqrt[3]{-\frac{D_{2}}{2}-\sqrt{D}}, \\
  	& h_{2}=\sqrt[3]{-\frac{D_{2}}{2}+\sqrt{D}}p+\sqrt[3]{-\frac{D_{2}}{2}-\sqrt{D}}p^{2}, \\
  	& h_{3}=\sqrt[3]{-\frac{D_{2}}{2}+\sqrt{D}}p^{2}+\sqrt[3]{-\frac{D_{2}}{2}-\sqrt{D}}p, \\
  	&w_{i}=h_{i}-\frac{3C_1}{4},\quad i=1,2,3.
  \end{aligned}   $$
By applying lemmas from reference \cite{LI2005519}, we have the following lemmas.
  \begin{lemma}
 Assume that $C_4\ge 0$.\\
 (i) For $D \ge 0$, Eq.~\eqref{5.4} has a positive root if and only if $w_1 > 0$ and $F(w_1) < 0$;\\
 (ii) If $D < 0$, then the existence of a positive root for Eq.~\eqref{5.4} if and only if  there exists at least one $w^* \in \{w_1, w_2, w_3\}$ such that $w^* > 0$ and $F(w^*) \le 0$.
 \end{lemma}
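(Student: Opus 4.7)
The plan is to analyze the quartic $F(w)=w^4+C_1w^3+C_2w^2+C_3w+C_4$ on the positive real axis by locating its critical points, which are the real roots of the cubic $F'(w)=4w^3+3C_1w^2+2C_2w+C_3=0$. The substitution $w=h-3C_1/4$ reduces this cubic to the depressed form $h^3+D_1h+D_2=0$, to which Cardano's formula applies. The sign of $D=(D_2/2)^2+(D_1/3)^3$ governs the case split: a single real root (or a repeated real root) when $D\geq 0$, and three distinct real roots when $D<0$.

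First I would fix the setting. By the hypothesis $C_4\geq 0$, one has $F(0)=C_4\geq 0$, and since $F$ has positive leading coefficient, $F(w)\to+\infty$ as $w\to+\infty$. Thus $F$ is non-negative at both ends of $(0,+\infty)$, so the existence of a positive root is equivalent to $F$ attaining a non-positive value somewhere in $(0,+\infty)$. By elementary calculus, any such excursion must involve a local minimum of $F$ located in $(0,+\infty)$. This observation converts the root-finding problem into a critical-point analysis, and throughout I would use that the positive leading coefficient of $F$ makes local minima and local maxima alternate on $\mathbb{R}$ in a fully determined pattern.

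For part (i), $D\geq 0$: the depressed cubic has $h_1$ as its only real root (with $h_2,h_3$ complex conjugate when $D>0$, or coalescing with $h_1$ when $D=0$). Hence $F'$ has the single real root $w_1=h_1-3C_1/4$, and since $F'$ is a cubic with positive leading coefficient and exactly one real root, it changes sign from negative to positive at $w_1$. Consequently $w_1$ is the global minimum of $F$ on $\mathbb{R}$, and $F$ admits a positive root iff this global minimum lies in $(0,+\infty)$ and is strictly negative, giving the stated condition $w_1>0$, $F(w_1)<0$. The strictness matches the subsequent Hopf-bifurcation requirement of a transversal crossing: a tangential value $F(w_1)=0$ would produce only a double positive root of $F$, preventing a genuine sign change of $F$ through the corresponding imaginary characteristic root.

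For part (ii), $D<0$: Cardano's formula yields three distinct real $h_i$'s, hence three distinct real critical points $w_1,w_2,w_3$ of $F$. Here $F$ has two local minima and one local maximum, interleaving. A positive root exists iff at least one local minimum lies in $(0,+\infty)$ with $F\leq 0$; equivalence to the condition that some $w^*\in\{w_1,w_2,w_3\}$ satisfies $w^*>0$ and $F(w^*)\leq 0$ (without specifying its type) follows because if $F$ is $\leq 0$ at the local maximum inside $(0,+\infty)$, then the boundary behavior $F(0),F(+\infty)\geq 0$ forces at least one of the flanking local minima to be in $(0,+\infty)$ and to satisfy $F\leq 0$ as well.

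The main obstacle will be bookkeeping the interplay between the strict and weak inequalities in the two cases and, for part (ii), verifying the equivalence between the clean statement over $\{w_1,w_2,w_3\}$ and the geometric condition that restricts attention to the local minima. This is handled by a careful case analysis based on the ordering of the three critical points and the sign pattern of $F'$ dictated by its positive leading coefficient; up to this book-keeping, the argument is a direct application of the Cardano classification together with the monotonicity structure of $F$.
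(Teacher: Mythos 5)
The paper does not actually prove this lemma: it is quoted verbatim from the reference \cite{LI2005519} (``By applying lemmas from reference \cite{LI2005519}, we have the following lemmas''), so your proposal supplies a self-contained argument where the paper offers only a citation. Your argument is the standard one behind that cited result, and its skeleton is sound: under $C_4\ge 0$ the quartic is nonnegative at $0$ and at $+\infty$, so a positive root exists iff $F$ dips to a nonpositive value on $(0,+\infty)$, which forces a critical point $w_c>0$ with $F(w_c)\le 0$; the Cardano classification of the roots of $F'$ via the sign of $D$ then reduces everything to checking $F$ at $w_1$ (resp.\ at $w_1,w_2,w_3$). Part (ii) in particular is complete --- your extra worry about distinguishing local minima from the local maximum is unnecessary, since for any critical $w^*>0$ with $F(w^*)\le 0$ the intermediate value theorem on $[w^*,+\infty)$ already produces a root.

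Two points deserve tightening. First, when $D=0$ with $D_1<0$ the cubic $F'$ does \emph{not} have ``exactly one real root'': it has the simple root $w_1$ together with a double root $w_2=w_3$ (these coalesce with $w_1$ only if $D_1=D_2=0$). The conclusion you need --- that $w_1$ is the global minimizer of $F$ --- still holds, because $F'$ changes sign only at roots of odd multiplicity, but the justification as written is not accurate in this subcase. Second, the strict inequality in part (i) is genuinely problematic: if $w_1>0$ and $F(w_1)=0$, then $w_1$ itself is a positive (double) root of Eq.~\eqref{5.4}, so the ``only if'' direction of (i) as literally stated fails. You noticed this and attributed the strictness to the later transversality requirement $F'(w_j)\ne 0$; that is the honest reading, but it means your displayed equivalence ``$F$ admits a positive root iff the global minimum is in $(0,+\infty)$ and is strictly negative'' is not an equivalence in the degenerate case. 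This imprecision is inherited from the cited lemma of \cite{LI2005519} rather than introduced by you, but a clean proof should either state (i) with $F(w_1)\le 0$ or explicitly exclude the tangential case.
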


 Assume that Eq.~\eqref{5.4} has \( k \) positive roots, where \( k \in \{1, 2, 3, 4\} \). Denote these roots by \( w_j^* \) for \( j = 1, 2, \cdots, k \). Then Eq.~\eqref{5.3} has $k$ corresponding positive roots $v_j = \sqrt{w_j^*}$ for $j = 1, 2, \cdots, k$.
 
 From Eq.~\eqref{5.2}, we have
  \begin{equation}\label{5.6}
 \cos(\tau v)=\frac{[A_3G_1(\tau)-G_0(\tau)]v^4+[A_2 G_0(\tau)-A_1G_1(\tau)]v^2-A_0 G_0(\tau)}{G_0^2(\tau)+G_1^2(\tau)v^2}=\varphi(v).
  \end{equation}
For $v = v_j$ ($j = 1,2,\dots,k$), solving \eqref{5.6} for $\tau$ yields
 $$\tau_n^{(j)}=\frac{1}{v_{j}}[\arccos\varphi(v_{j})+2n\pi],\quad n=0,1,2,\ldots,j=1,2,\ldots k.$$    
   Note that this formula defines $\tau_n^{(j)}$ implicitly, since $\varphi$ depends on $\tau$ through $G_0(\tau)$ and $G_1(\tau)$. Numerical solution is required. Thus, when \(\tau = \tau_n^{(j)}\), \(\pm iv_j\) constitutes a pair of purely imaginary roots of Eq.~\eqref{5.1}. There is no doubt that $\left\{\tau_n^{(j)}\right\} $ forms a strictly increasing sequence for $n = 0, 1, 2,\cdots$ with $\lim_{n\to +\infty}\tau_n^{(j)}=+\infty$. Therefore, there exist $j_0\in\left\{1,2,\cdots k\right\}$ and $n_0\in\left\{0,1,2,\cdots\right\}$ such that $$\tau_{n_0}^{(j_0)}=\min\left\{\tau_n^{(j)}:j=1,2,\ldots,k;n=0,1,2,\ldots\right\}.$$
Let
  \begin{equation}\label{5.7}
  	\tau_0=\tau_{n_0}^{(j_0)},\quad v_0=v_{j_0},\quad w_0={w_{j_0}}^*.
  	 \end{equation}
 \begin{lemma}Assume condition \eqref{eq:H} holds:\\ 	
 	(i) If any condition among (a)-(c) is satisfied:
 	\begin{itemize}
 		\item[(a)] $C_4 < 0$;
 		\item[(b)] $C_4 \geq 0$, $D \geq 0$, $w_1 > 0$ and $F(w_1)\le 0$;
 		\item[(c)] $C_4 \geq 0$, $D < 0$, and there exists $w^{\ast} \in \{w_1, w_2, w_3\}$ such that $w^{\ast} > 0$ and $F(w^{\ast}) \leq 0$,
 	\end{itemize}
 	then all roots of Eq.~\eqref{5.1} possess negative real parts for $\tau \in [0, \tau_0)$;\\
 	(ii) If none of conditions (a)-(c) from (i) are fulfilled, every root of Eq.~\eqref{5.1} exhibits negative real parts throughout $\tau \geq 0$.
 \end{lemma}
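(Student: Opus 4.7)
The plan is to combine the base case at $\tau = 0$ furnished by Theorem 4.2 with a continuation/crossing argument for the quasi-polynomial \eqref{5.1}, using Lemmas 5.1 and 5.2 to characterize exactly when purely imaginary roots can appear. Under hypothesis \eqref{eq:H}, Theorem 4.2 guarantees that every root of $m(\lambda, 0) = 0$ has strictly negative real part. Since $(\lambda, \tau) \mapsto m(\lambda, \tau)$ is entire in $\lambda$ and continuous in $\tau$, a Rouché-type argument from the standard theory of delay differential equations shows that the roots of $m(\cdot, \tau) = 0$ vary continuously with $\tau$ and can enter the closed right half-plane only by first crossing the imaginary axis.

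Next I would make precise the correspondence between purely imaginary roots and positive roots of $F$. By the derivation leading to \eqref{5.3}--\eqref{5.4}, the existence of a purely imaginary root $\lambda = iv$ with $v > 0$ of \eqref{5.1} at some $\tau \geq 0$ is equivalent to $w = v^2$ being a positive real root of $F(w) = 0$. Lemma 5.1 produces such a root under condition (a), and Lemma 5.2 produces one under (b) or (c). Hence in each of these three cases the quantity $\tau_0$ defined in \eqref{5.7} is well posed, and by its minimality no $\tau \in [0, \tau_0)$ yields a purely imaginary root. Combining this with the base case at $\tau = 0$ and the continuity/crossing principle, every root of \eqref{5.1} remains in the open left half-plane for $\tau \in [0, \tau_0)$, which proves (i).

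For (ii), if none of (a), (b), (c) holds, then by Lemmas 5.1 and 5.2 the quartic $F$ has no positive real root. Consequently \eqref{5.3} has no positive real solution in $v$, so \eqref{5.1} admits no purely imaginary root for any $\tau \geq 0$. The continuity/crossing principle then prohibits any root from leaving the open left half-plane as $\tau$ increases, so every root of \eqref{5.1} has strictly negative real part for all $\tau \geq 0$.

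The main obstacle I anticipate is making the continuity/crossing principle fully rigorous. In particular one must rule out the possibility that, as $\tau$ varies, new roots of \eqref{5.1} ``appear from infinity'' in the right half-plane rather than crossing the imaginary axis from the left. For quasi-polynomials of the form under consideration, standard asymptotic estimates locate roots with large modulus in vertical strips that recede into the left half-plane uniformly in $\tau$ on compact sets; this classical fact is the delicate ingredient on which the argument truly rests, and the proof will invoke it (e.g.\ via Hale--Lunel or the reference \cite{LI2005519} already cited for Lemma 5.2) rather than reproduce it.
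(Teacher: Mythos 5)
Your argument is correct and is precisely the standard continuation/crossing argument that the paper itself tacitly relies on: Lemma 5.3 is stated there without proof, as a direct consequence of Theorem 4.2 (the $\tau=0$ base case under \eqref{eq:H}), Lemmas 5.1--5.2 (existence or non-existence of positive roots of $F$, hence of purely imaginary roots $\pm iv$ with $v>0$), the minimality of $\tau_0$, and the retarded-type structure of \eqref{5.1} which precludes roots entering from infinity. The only point worth adding to your sketch is that a crossing through $\lambda=0$ must also be excluded, which follows from \eqref{eq:H} because $m(0,\tau)=A_0+G_0(\tau)=A_0+G_0(0)e^{-\mu\tau}\ge A_0+G_0(0)>0$ for all $\tau\ge 0$.
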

 
  Let  
  \begin{equation}\label{5.8}
  s(\tau) = \xi(\tau) + iv(\tau)
   \end{equation} be a root of Eq.~\eqref{5.1} satisfying $\xi(\tau_0) = 0$, $v(\tau_0) = v_0$.
 \begin{lemma} 
 Assume that \( F'(w_j)\neq 0 \). Then for \( \tau = \tau_n^{(j)} \), Eq.~\eqref{5.1} possesses a pair of simple purely imaginary roots  $\pm i v_j$; that is,  $$\left.\frac{\mathrm{d}m(\lambda,\tau)} {\mathrm{d}\lambda}\right|_{\lambda=\mathrm{i}v_j}\neq 0.$$
  \end{lemma}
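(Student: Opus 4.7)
The plan is to argue by contradiction: suppose $\frac{dm}{d\lambda}(iv_j,\tau_n^{(j)}) = 0$ and derive $F'(w_j) = 0$, contradicting the hypothesis. Write
\[ m(\lambda,\tau) = P(\lambda) + Q(\lambda,\tau)e^{-\lambda\tau}, \]
with $P(\lambda) = \lambda^4 + A_3\lambda^3 + A_2\lambda^2 + A_1\lambda + A_0$ and $Q(\lambda,\tau) = G_1(\tau)\lambda + G_0(\tau)$. The bridge between the two conditions is the identity $F(v^2) = |P(iv)|^2 - |Q(iv,\tau)|^2$, valid as a polynomial identity in $v$ for each fixed $\tau$; indeed this is precisely how \eqref{5.3} was obtained from \eqref{5.2}.

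First, from $m(iv_j,\tau_n^{(j)}) = 0$ I read off
\[ e^{-iv_j\tau_n^{(j)}} = -\frac{P(iv_j)}{Q(iv_j,\tau_n^{(j)})}. \]
The denominator is nonzero: $Q(iv_j,\tau) = G_0 + iG_1 v_j = 0$ would force both $G_0 = 0$ and $G_1 v_j = 0$, impossible since $v_j > 0$ and $G_1 \neq 0$. Correspondingly $P(iv_j) \neq 0$ by $|P(iv_j)|^2 = |Q(iv_j,\tau)|^2$. Now compute $\frac{\partial m}{\partial \lambda}(\lambda,\tau) = P'(\lambda) + [G_1 - \tau Q(\lambda,\tau)]e^{-\lambda\tau}$; setting this to zero at $\lambda = iv_j$, $\tau = \tau_n^{(j)}$, substituting the exponential, and clearing denominators gives
\[ P'(iv_j)\,Q(iv_j,\tau) - G_1\,P(iv_j) + \tau\,P(iv_j)\,Q(iv_j,\tau) = 0. \]

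The final step converts this into a statement about $F'(w_j)$. Multiply by $\overline{Q(iv_j,\tau)}$, use $|Q(iv_j,\tau)|^2 = |P(iv_j)|^2$, divide through by the nonzero $P(iv_j)$, and take imaginary parts (the real term $\tau|P(iv_j)|^2$ drops out) to obtain
\[ \mathrm{Im}\bigl[P'(iv_j)\overline{P(iv_j)}\bigr] = \mathrm{Im}\bigl[G_1\,\overline{Q(iv_j,\tau)}\bigr]. \]
On the other hand, the standard identity $\frac{d}{dv}|R(iv)|^2 = -2\,\mathrm{Im}[R'(iv)\overline{R(iv)}]$ (valid for any polynomial $R$ with real coefficients) applied to the bridge identity yields
\[ 2v_j F'(w_j) = -2\,\mathrm{Im}\bigl[P'(iv_j)\overline{P(iv_j)}\bigr] + 2\,\mathrm{Im}\bigl[G_1\,\overline{Q(iv_j,\tau)}\bigr], \]
whose right-hand side vanishes by the displayed relation. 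Since $v_j > 0$, this forces $F'(w_j) = 0$, contradicting the hypothesis.

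The only delicate point is keeping the $\tau$-dependence of $Q$ straight: although $G_0(\tau)$, $G_1(\tau)$, and hence the coefficients $C_3,C_4$ of $F$ depend on $\tau$, throughout the argument $\tau$ is held fixed at $\tau_n^{(j)}$, so $\partial Q/\partial\lambda = G_1(\tau)$ is a genuine $\lambda$-derivative with no hidden $\tau$-terms. Once this bookkeeping is handled cleanly, each step reduces to routine algebra and the simplicity of $\pm iv_j$ follows from the chain $F'(w_j)\neq 0 \Rightarrow \frac{dm}{d\lambda}(iv_j,\tau_n^{(j)})\neq 0$.
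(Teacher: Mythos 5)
Your proof is correct and follows essentially the same route as the paper's: both argue by contradiction from $\tfrac{dm}{d\lambda}(iv_j,\tau_n^{(j)})=0$, use the decomposition $m=P+Qe^{-\lambda\tau}$ together with $|P(iv_j)|=|Q(iv_j)|$, and reduce the resulting identity to the statement that $F'(w_j)$ must vanish (the paper phrases this as $\mathrm{Re}(i\tau_n^{(j)})=-\tfrac{v_j}{|m_1(iv_j)|^2}F'(w_j)\neq 0$ contradicting the realness of $\tau_n^{(j)}$, which is the same computation). Your packaging via the bridge identity $F(v^2)=|P(iv)|^2-|Q(iv,\tau)|^2$ and the derivative formula $\tfrac{d}{dv}|R(iv)|^2=-2\,\mathrm{Im}[R'(iv)\overline{R(iv)}]$ is a clean equivalent of the paper's bookkeeping.
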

  
  \begin{proof} 
  	If $\mathrm{i}v_j$ is not simple, then
  $$
  \left.\frac{\mathrm{d}m(\lambda,\tau)}{\mathrm{d}\lambda}\right|_{\lambda=\mathrm{i}v_j}=0.$$
  Denote
  $$m_1(\lambda)=\lambda^4+A_3\lambda^3+A_2\lambda^2+A_1\lambda+A_0 ,\qquad m_2(\lambda)=G_1(\tau)\lambda+G_0(\tau).$$
  Then, we can rewrite Eq.~\eqref{5.1} as
   \begin{equation}\label{5.9}
   	m_{1}(\lambda)+m_{2}(\lambda)e^{-\lambda\tau_n^{(j)}}=0.
   	 \end{equation}
  Furthermore, we have
  $$\left\{
  \begin{array}
  	{l}\dfrac{\mathrm{d}m_1(\lambda)}{\mathrm{d}\lambda}\bigg|_{\lambda=\mathrm{i}v_j}=-4v_j^3\mathrm{i}-3A_3v_j^2+2A_2v_j\mathrm{i}+A_1, \\
  	\\
  \dfrac{\mathrm{d}m_2(\lambda)}{\mathrm{d}\lambda}\bigg|_{\lambda=\mathrm{i}v_j}=G_1(\tau).
  \end{array}\right. $$
  Thus, one yields
  \begin{equation}\label{5.10}
  \left.\frac{\mathrm{d} m_{1}(\lambda)}{\mathrm{d} \lambda}\right|_{\lambda=\mathrm{i} v_j}=-\mathrm{i} \frac{\mathrm{d}m_{1}(\mathrm{i} v_j)}{\mathrm{d} v_j}, \quad\left.\frac{\mathrm{d}m_{2}(\lambda)}{\mathrm{d} \lambda}\right|_{\lambda=\mathrm{i} v_j}=-\mathrm{i} \frac{\mathrm{d} m_{2}(\mathrm{i} v_j)}{\mathrm{d} v_j}.
\end{equation}
  Following Eq.~\eqref{5.10}, one has
  $$\begin{aligned}
  	\left.\frac{\mathrm{d} m(\lambda, \tau_n^{(j)})}{\mathrm{d} \lambda}\right|_{\lambda=\mathrm{i} v_j} &=\left.\frac{\mathrm{d}}{\mathrm{d} \lambda}\left\{m_{1}(\lambda)+m_{2}(\lambda)e^{-\lambda \tau_n^{(j)}}\right\}\right|_{\lambda=\mathrm{i} v_j} \\
  	&=-\mathrm{i} \frac{\mathrm{d} m_{1}(\mathrm{i} v_j)}{\mathrm{d} v_j}-\mathrm{i} \frac{\mathrm{d} m_{2}(\mathrm{i}v_j)}{\mathrm{d} v_j} e^{-\mathrm{i} v_j \tau_n^{(j)}}-\tau_n^{(j)} m_{2}(\mathrm{i} v_j) e^{-\mathrm{i} v_j\tau_n^{(j)}} \\
  	&=0,
  \end{aligned}$$
 which implies 
 \begin{equation}\label{5.11}
 	\frac{\mathrm{d} m_{1}(\mathrm{i} v_j)}{\mathrm{d} v_j}+\frac{\mathrm{d} m_{2}(\mathrm{i}v_j)}{\mathrm{d} v_j} e^{-\mathrm{i} v_j \tau_n^{(j)}}-\mathrm{i}\tau_n^{(j)} m_{2}(\mathrm{i} v_j) e^{-\mathrm{i} v_j\tau_n^{(j)}}=0.
 	 \end{equation}
  It follows from Eqs.~\eqref{5.9} and \eqref{5.11} that one derives
  $$\mathrm{i}\tau_n^{(j)}=\frac{1}{m_2(\mathrm{i}v_j)}\frac{\mathrm{d}m_2(\mathrm{i}v_j)}{\mathrm{d}v_j}-\frac{1}{m_1(\mathrm{i}v_j)}\frac{\mathrm{d}m_1(\mathrm{i}v_j)}{\mathrm{d}v_j},$$
and
$$\begin{aligned}
	\mathrm{Re}(\mathrm{i}\tau_n^{(j)})= & \mathrm{Re}\left\{\frac{1}{m_{2}(\mathrm{i}v_j)}\frac{\mathrm{d}m_{2}(\mathrm{i}v_j)}{\mathrm{d}v_j}-\frac{1}{m_{1}(\mathrm{i}v_j)}\frac{\mathrm{d}m_{1}(\mathrm{i}v_j)}{\mathrm{d}v_j}\right\} \\
	= & \mathrm{Re}\left\{\frac{\overline{m_{2}(\mathrm{i}v_j)}}{m_{2}(\mathrm{i}v_j)\overline{m_{2}(\mathrm{i}v_j)}}\frac{\mathrm{d}m_{2}(\mathrm{i}v_j)}{\mathrm{d}v_j}-\frac{\overline{m_{1}(\mathrm{i}v_j)}}{m_{1}(\mathrm{i}v_j)\overline{m_{1}(\mathrm{i}v_j)}}\frac{\mathrm{d}m_{1}(\mathrm{i}v_j)}{\mathrm{d}v_j}\right\}.
\end{aligned}$$  
 Since $m_{1}(i v_j)=-m_{2}(i v_j)e^{-i v_j\tau_n^{(j)}}$, we have $|m_1(\mathrm{i}v_j)|=|m_2(\mathrm{i}v_j)|$. we further derive
  $$\begin{aligned}
  	\mathrm{Re}(\mathrm{i}\tau_n^{(j)})=& \mathrm{Re}\left\{\frac{1}{m_{1}(\mathrm{i}v_j)\overline{m_{1}(\mathrm{i}v_j)}}\left(\overline{m_{2}(\mathrm{i}v_j)}\frac{\mathrm{d}m_{2}(\mathrm{i}v_j)}{\mathrm{d}v_j}-\overline{m_{1}(\mathrm{i}v_j)}\frac{\mathrm{d}m_{1}(\mathrm{i}v_j)}{\mathrm{d}v_j}\right)\right\} \\
  	=&-\frac{v_j}{|m_{1}(\mathrm{i}v_j)|^{2}}\left[4w_j^{3}+(3A_{3}^{2}-6A_{2})w_j^{2}\right] \\
  	& -\frac{v_j}{|m_{1}(\mathrm{i}v_j)|^{2}}\left[(2A_{2}^{2}+4A_{0}-4A_{3}A_{1})w_j+(A_{1}^{2}-2A_{2}A_{0}-G_1(\tau)^{2})\right] \\
  	=&-\frac{v_j}{\lvert m_{1}(\mathrm{i}v_j)\rvert^{2}} F'(w_j) \neq 0.
  \end{aligned}$$
  The equality $\mathrm{Re}(\mathrm{i}\tau_n^{(j)}) \neq 0$ contradicts the real-valued nature of $\tau_n^{(j)}$, thereby concluding the proof.
  \end{proof}
  \begin{lemma}
  Within the framework of Eq.~\eqref{5.4}, assuming $w_j = v_j^2$ with $F'(w_j) \neq 0$ yields the sign correspondence:
  	$$\mathrm{sign}\left\{\left.\frac{\mathrm{d}(\mathrm{Re}\lambda)}{\mathrm{d}\tau}\right|_{\tau=\tau_{n}^{(j)}}\right\}= \mathrm{sign}\left\{F'(w_j)\right\}.$$
  \end{lemma}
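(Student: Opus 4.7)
The plan is to carry out the classical transversality calculation for delay equations: implicitly differentiate the characteristic equation \eqref{5.1} with respect to $\tau$, solve for $d\lambda/d\tau$, and then evaluate the real part at $\lambda=\mathrm{i}v_j$, $\tau=\tau_n^{(j)}$. To avoid sign confusion, I will work with the reciprocal, using the fact that $\mathrm{sign}\{\mathrm{Re}(d\lambda/d\tau)\}=\mathrm{sign}\{\mathrm{Re}((d\lambda/d\tau)^{-1})\}$ because $\mathrm{Re}(z^{-1})=\mathrm{Re}(z)/|z|^2$. Much of the algebra is already staged by the preceding Lemma 5.4, so I plan to reuse its intermediate identities (in particular $m_2(\mathrm{i}v_j)e^{-\mathrm{i}v_j\tau_n^{(j)}}=-m_1(\mathrm{i}v_j)$ and the consequent $|m_1(\mathrm{i}v_j)|=|m_2(\mathrm{i}v_j)|$).

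First I would write \eqref{5.1} as $m(\lambda,\tau)=m_1(\lambda)+m_2(\lambda)e^{-\lambda\tau}=0$ with the same $m_1,m_2$ as in Lemma 5.4, differentiate implicitly to obtain
$$\left(\frac{d\lambda}{d\tau}\right)^{-1}=\frac{m_1'(\lambda)+m_2'(\lambda)e^{-\lambda\tau}-\tau m_2(\lambda)e^{-\lambda\tau}}{\lambda\,m_2(\lambda)e^{-\lambda\tau}},$$
and then eliminate $e^{-\lambda\tau}$ via $m_2(\lambda)e^{-\lambda\tau}=-m_1(\lambda)$ to reach the clean form
$$\left(\frac{d\lambda}{d\tau}\right)^{-1}=-\frac{m_1'(\lambda)}{\lambda m_1(\lambda)}+\frac{m_2'(\lambda)}{\lambda m_2(\lambda)}-\frac{\tau}{\lambda}.$$
Substituting $\lambda=\mathrm{i}v_j$ makes $-\tau_n^{(j)}/(\mathrm{i}v_j)$ purely imaginary, so the real part reduces to $\tfrac{1}{v_j}\mathrm{Im}\bigl[m_2'(\mathrm{i}v_j)/m_2(\mathrm{i}v_j)-m_1'(\mathrm{i}v_j)/m_1(\mathrm{i}v_j)\bigr]$.

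The main computation (and where the $F'(w_j)$ structure has to emerge) is to split $m_1(\mathrm{i}v_j)$ and $m_1'(\mathrm{i}v_j)$ into real and imaginary parts in terms of $w_j=v_j^2$, and similarly for $m_2$. Grouping coefficients by powers of $w_j$ should reproduce $-4w_j^3-(3A_3^2-6A_2)w_j^2-(2A_2^2+4A_0-4A_1A_3)w_j-(A_1^2-2A_0A_2)$ from the $m_1$ term, which is $-F'(w_j)-G_1(\tau)^2$ when the constants $C_1,C_2,C_3$ from \eqref{5.4} are substituted; the $m_2$ term contributes exactly $-G_1(\tau)^2 v_j/|m_2(\mathrm{i}v_j)|^2$, and invoking $|m_1(\mathrm{i}v_j)|=|m_2(\mathrm{i}v_j)|$ from Lemma 5.4 causes the two $G_1^2$ pieces to cancel. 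This yields
$$\mathrm{Re}\left[\left(\frac{d\lambda}{d\tau}\right)^{-1}\right]_{\tau=\tau_n^{(j)}}=\frac{F'(w_j)}{|m_1(\mathrm{i}v_j)|^2}.$$
Finally, the equivalence of signs between $\mathrm{Re}(d\lambda/d\tau)$ and its reciprocal, together with $F'(w_j)\neq 0$, produces the claimed identity.

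The main obstacle I expect is bookkeeping: the coefficients $A_0,A_1,A_2,A_3$ are lengthy expressions in the model parameters, so a brute-force expansion is unwieldy. The trick that makes everything go through is to perform the expansions symbolically in $w_j$ alone and to recognize that the non-$G_1^2$ terms of $\mathrm{Im}[m_1'/m_1]$ are exactly $-v_j F'(w_j)/|m_1|^2$ after substituting $C_1=A_3^2-2A_2$, $C_2=2A_0+A_2^2-2A_1A_3$, $C_3=A_1^2-2A_0A_2-G_1^2$. Once this algebraic match is pinpointed, the use of $|m_1(\mathrm{i}v_j)|=|m_2(\mathrm{i}v_j)|$ to cancel the stray $G_1^2$ contributions is the only non-obvious simplification, and it is made available by the characteristic-equation identity already exploited in Lemma 5.4.
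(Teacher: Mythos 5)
Your proposal follows essentially the same route as the paper: implicit differentiation of \eqref{5.1}, passing to $(\mathrm{d}\lambda/\mathrm{d}\tau)^{-1}$, evaluating the real part at $\lambda=\mathrm{i}v_j$, and using the identity $(v_j^4-A_2v_j^2+A_0)^2+(A_1v_j-A_3v_j^3)^2=G_1^2(\tau)v_j^2+G_0^2(\tau)$ from \eqref{5.3} (your $|m_1(\mathrm{i}v_j)|=|m_2(\mathrm{i}v_j)|$) to collapse the expression to $F'(w_j)/|m_1(\mathrm{i}v_j)|^2$. The only difference is cosmetic: you treat $G_0,G_1$ as constants in the differentiation (producing $-\tau/\lambda$), while the paper carries their $\tau$-dependence (producing $-\mu/\lambda$); both of these terms are purely imaginary at $\lambda=\mathrm{i}v_j$ and drop out of the real part, so the sign conclusion is unaffected.
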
 
 \begin{proof}
  Differentiating Eq.~\eqref{5.1} with respect to $\tau$, we have  
  $$(4\lambda^3+3A_3\lambda^2+2A_2\lambda+A_1)\frac{\mathrm{d}\lambda}{\mathrm{d}\tau}+\left(G_1(\tau)-G_1(\tau)\mu\lambda-G_0(\tau)\mu\right)e^{-\lambda\tau}\frac{\mathrm{d}\lambda}{\mathrm{d}\tau}=\left(G_1(\tau)\lambda+G_0(\tau)\right)\lambda e^{-\lambda\tau}.$$
   This gives
  $$\begin{aligned}
  	\left(\frac{\mathrm{d}\lambda}{\mathrm{d}\tau}\right)^{-1} & =-\frac{4\lambda^3+3A_3\lambda^2+2A_2\lambda+A_1}{(G_1(\tau)\lambda+G_0(\tau))\lambda e^{-\lambda\tau}}  +\frac{G_1(\tau)}{\lambda(G_1(\tau)\lambda+G_0(\tau))}-\frac{\mu}{\lambda}\\&=-\frac{4\lambda^3+3A_3\lambda^2+2A_2\lambda+A_1}{\lambda(\lambda^4+A_3\lambda^3+A_2\lambda^2+A_1\lambda+A_0)}  +\frac{G_1(\tau)}{\lambda(G_1(\tau)\lambda+G_0(\tau))}-\frac{\mu}{\lambda}.
  \end{aligned}$$
   Furthermore, there holds
   $$\mathrm{sign}\left\{\left.\frac{\mathrm{d}(\mathrm{Re}\lambda)}{\mathrm{d}\tau}\right|_{\tau=\tau_{n}^{(j)}}\right\}=\mathrm{sign}\left\{\mathrm{Re}\left(\frac{\mathrm{d}\lambda}{\mathrm{d}\tau}\right)^{-1}\bigg|_{\lambda=iv_j}\right\}$$
  
   \[
   \begin{aligned}
   	&= \mathrm{sign}\left\{
   	\frac{4v_j^{6}+3(A_{3}^{2}-2A_{2})v_j^{4}+2(2A_{0}+A_{2}^{2}-2A_{1}A_{3})v_j^{2}+(A_{1}^{2}-2A_{0}A_{2})}
   	{(v_j^{4}-A_{2}v_j^{2}+A_{0})^{2} + (A_{1}v_j-A_{3}v_j^{3})^{2}} \right.\\
   	&\quad \left.- \frac{G_{1}^{2}(\tau)}{G_{1}^{2}(\tau)v_j^{2}+G_{0}^{2}(\tau)}
   	\right\}.
   \end{aligned}
   \]
    From Eq.~\eqref{5.3}, we get
    $$\left(v_j^{4}-A_{2}v_j^{2}+A_{0}\right)^{2}+\left(A_{1}v_j-A_{3}v_j^{3}\right)^{2}=G_1^2(\tau)v_j^2+G_0^2(\tau).$$
    Thus, we obtain
    $$\begin{aligned}
    \mathrm{sign}\left\{	\left.\frac{\mathrm{d}(\mathrm{Re}\lambda)}{\mathrm{d}\tau}\right|_{\tau=\tau_{n}^{(j)}}\right\}= &\mathrm{sign} \left\{\frac{4v_j^{6}+3C_1v_j^{4}+2C_2v_j^{2}+C_3}{\left(v_j^{4}-A_{2}v_j^{2}+A_{0}\right)^{2}+\left(A_{1}v_j-A_{3}v_j^{3}\right)^{2}}\right\}\\
    	= &\mathrm{sign}\left\{ \frac{F^{\prime}(w_j)}{\left(v_j^{4}-A_{2}v_j^{2}+A_{0}\right)^{2}+\left(A_{1}v_j-A_{3}v_j^{3}\right)^{2}}\right\}.
    \end{aligned}$$
    Hence, there holds $\mathrm{sign}\left\{\left.\frac{\mathrm{d}(\mathrm{Re}\lambda)}{\mathrm{d}\tau}\right|_{\tau=\tau_{n}^{(j)}}\right\}= \mathrm{sign}\left\{F'(w_j)\right\}$, that is, the sign of $\left.\frac{\mathrm{d}(\mathrm{Re}\lambda)}{\mathrm{d}\tau}\right|_{\tau=\tau_{n}^{(j)}}$ is the same as $F'(w_j)$.
 \end{proof}
The following theorem is derived from the results of Lemmas 5.1–5.5.
   \begin{thm}
 Adopting the definitions \eqref{5.7} and \eqref{5.8} for $v_0$, $w_0$, $\tau_0$, and $s(\tau)$, and assuming that \eqref{eq:H} holds, we have:
 \begin{itemize}
 	\item[(i)] If none of the conditions (a)–(c) in Lemma~5.3 are satisfied, then the equilibrium $E^*$ is locally asymptotically stable for all $\tau \ge 0$;
 	\item[(ii)] If any one of the conditions (a)–(c) is satisfied, then $E^*$ is locally asymptotically stable for all $\tau \in [0,\tau_0)$;
 	\item[(iii)] Moreover, if one of the conditions (a)–(c) holds and $F'(w_0)\ne 0$, then a Hopf bifurcation occurs for system~\eqref{1.2} at $E^*$ as $\tau$ passes through $\tau_0$.
 \end{itemize}
  \end{thm}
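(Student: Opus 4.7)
The plan is to reduce the theorem to the spectral analysis already developed in Lemmas~5.1--5.5 and to invoke the abstract Hopf bifurcation theorem for non-densely defined Cauchy problems \cite{Liu2011}. The ambient framework has been assembled in Section~3: $\tilde B = B + DF(E^*)$ is a Hille--Yosida operator, $\tilde F$ is smooth with $\tilde F(0)=0$ and $D\tilde F(0)=0$, and Proposition~3.1 provides $\omega_{0,\mathrm{ess}}(\tilde B_0) \le -\chi < 0$. Hence the spectrum of $\tilde B$ in the half-plane $\{\mathrm{Re}\,\lambda > -\chi\}$ is discrete and consists precisely of the zeros of the characteristic function $m(\lambda,\tau)$ of Eq.~(5.1), each with finite algebraic multiplicity. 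Parts (i) and (ii) are then immediate corollaries of Theorem~3.3(i): Lemma~5.3(ii) supplies the negativity of every root for all $\tau \ge 0$ under the hypotheses of (i), while Lemma~5.3(i) supplies the same on $[0,\tau_0)$ under the hypotheses of (ii).

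For part (iii) I would verify the three standard hypotheses of the Hopf theorem at the candidate bifurcation value $\tau = \tau_0$. First, by construction (5.7), $\pm i v_0$ are roots of $m(\cdot,\tau_0)=0$, and since $F'(w_0) \ne 0$ Lemma~5.4 ensures that they are simple as roots of the characteristic function (and hence as eigenvalues of $\tilde B$, using that the associated projector is finite-rank by Proposition~3.1). Second, the minimality of $\tau_0$ in (5.7), combined with Lemma~5.3, guarantees that no other root of $m(\cdot,\tau_0)=0$ lies on the imaginary axis or has positive real part, so the pair $\pm i v_0$ is isolated on $i\mathbb{R}$ and the rest of $\sigma(\tilde B)$ stays strictly in the open left half-plane. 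Third, the transversality condition $\frac{d(\mathrm{Re}\,\lambda)}{d\tau}\big|_{\tau=\tau_0}\ne 0$ is provided by Lemma~5.5, which identifies the sign of this derivative with $\mathrm{sign}\{F'(w_0)\}$. Applying the abstract Hopf bifurcation theorem of \cite{Liu2011} to the Cauchy problem (3.2) at $E^*$ then yields a one-parameter family of non-trivial periodic orbits of system~(1.2) bifurcating from $E^*$ as $\tau$ crosses $\tau_0$.

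The step I expect to require the most care is the isolation claim in (iii), namely the exclusion of additional purely imaginary roots at $\tau = \tau_0$. While minimality of $\tau_0$ among the $\tau_n^{(j)}$ rules out any smaller crossing time, it is \emph{a priori} possible that two distinct positive roots $w_j, w_{j'}$ of $F$ produce crossing times that happen to coincide at $\tau_0$. Handling this requires unpacking the transcendental relation (5.6), which defines $\tau_n^{(j)}$ only implicitly because $G_0(\tau)$ and $G_1(\tau)$ depend on $\tau$, and then exploiting the strict monotonicity of each sequence $\{\tau_n^{(j)}\}_{n \ge 0}$ together with the definition of $(j_0, n_0)$ to conclude that at $\tau = \tau_0$ only the pair $\pm i v_0$ sits on $i\mathbb{R}$. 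Once this non-resonance check is in place, the remaining verification is routine and the abstract Hopf theorem applies without further difficulty.
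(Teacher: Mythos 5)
Your proposal follows essentially the same route as the paper, which states this theorem with no written proof beyond the remark that it ``is derived from the results of Lemmas 5.1--5.5'' --- that is, parts (i)--(ii) from Theorem~3.3 combined with Lemma~5.3, and part (iii) from the simplicity and transversality results of Lemmas~5.4--5.5 together with the abstract Hopf bifurcation theorem of \cite{Liu2011} applied in the non-densely defined framework of Section~3. The subtlety you flag about possibly coincident crossing times arising from distinct positive roots $w_j$ at $\tau=\tau_0$ is a genuine point that the paper passes over silently, so your account is, if anything, more complete than the original.
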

 \section{Numerical simulations}
In this part, we apply  the forward difference approach to discretize the system \eqref{1.2} and subsequently perform numerical simulations to validate our theoretical findings. For the purpose of conducting subsequent analyses, we fix the maximum recovery age at 100 and use the following parameter values:  	$$  	\mu = 0.008,\ \beta = 0.0009,\ \eta = 0.01,\ \iota = 0.58,\ \gamma = 0.45,\ d = 0.05,\ \sigma = 0.5,\ \vartheta^* = 0.35.  	$$  	

The initial conditions are chosen as 	$$  	S_0 = 100,\quad V_0 = 50,\quad I_0 = 20,\quad R_0(a) = 10e^{-0.05a}.  	$$  	  	

First, when $\Pi = 5$ and $\tau = 12$, we obtain $\mathcal{R}_0 = 0.57031 < 1$, 
and the disease-free equilibrium $E^0$ is locally asymptotically stable. Figure 2 illustrates time-dependent changes for each component of the system \eqref{1.2}.  	  	

Proceeding with $\Pi$ set to 20, we determine $\mathcal{R}_0 = 2.2812 > 1$. With $\tau = 12$, $J(\tau) = 0.39967$ and $\mu + \gamma + d = 0.508$. Since $J(\tau) < \mu + \gamma + d$, Figure 3 demonstrates that the endemic equilibrium $E^*$ is locally asymptotically stable. When $\tau$ is increased to 19, $J(\tau) = 0.37791 < \mu + \gamma + d = 0.508$, and Figure 4 confirms this stability.  	  	

Finally, when $\tau$ is further increased to 20, we obtain $J(\tau) = 0.3749 < \mu + \gamma + d = 0.508$. Figure 5 illustrates the loss of stability of $E^*$ through Hopf bifurcation, resulting in the onset of periodic dynamics. For $\tau = 60$, where $J(\tau) = 0.2723 < \mu + \gamma + d = 0.508$, numerical simulations show sustained periodic oscillations in the system (see Figure 6). These results indicate that when $\tau$ is nonzero, the Hopf bifurcation threshold $\tau_0$ lies in the interval (19, 20).
  	
\begin{figure}[htbp] % [htbp] 是位置参数，表示这里、顶部、底部或单独一页
	\centering
	% 第一行，三张子图
	\begin{subfigure}{0.3\textwidth}
		\centering
		\includegraphics[width=4.2cm,height=3.15cm]{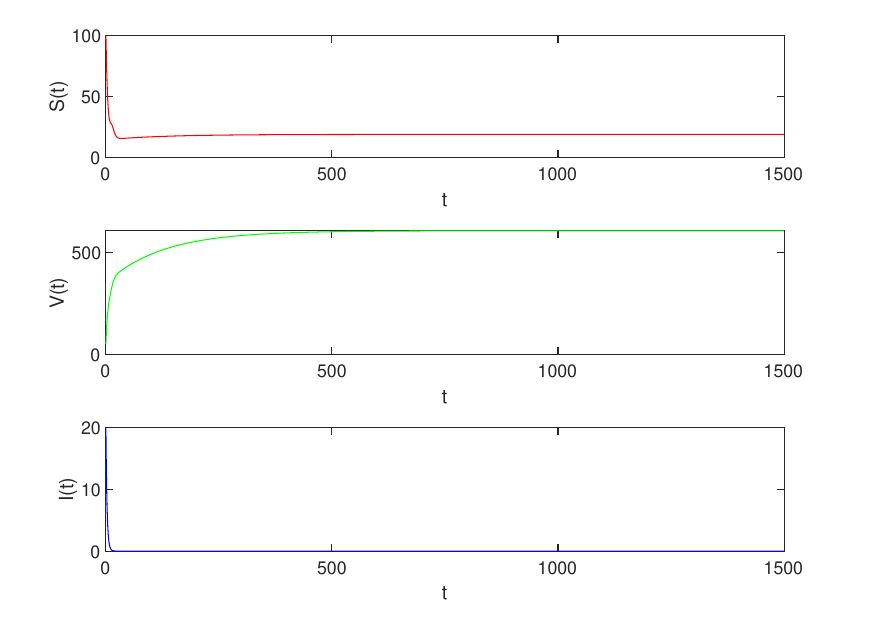}
		\caption{}
		\label{fig:image1}
	\end{subfigure}%
	\hfill
	\begin{subfigure}{0.3\textwidth}
		\centering
		\includegraphics[width=4.2cm,height=3.15cm]{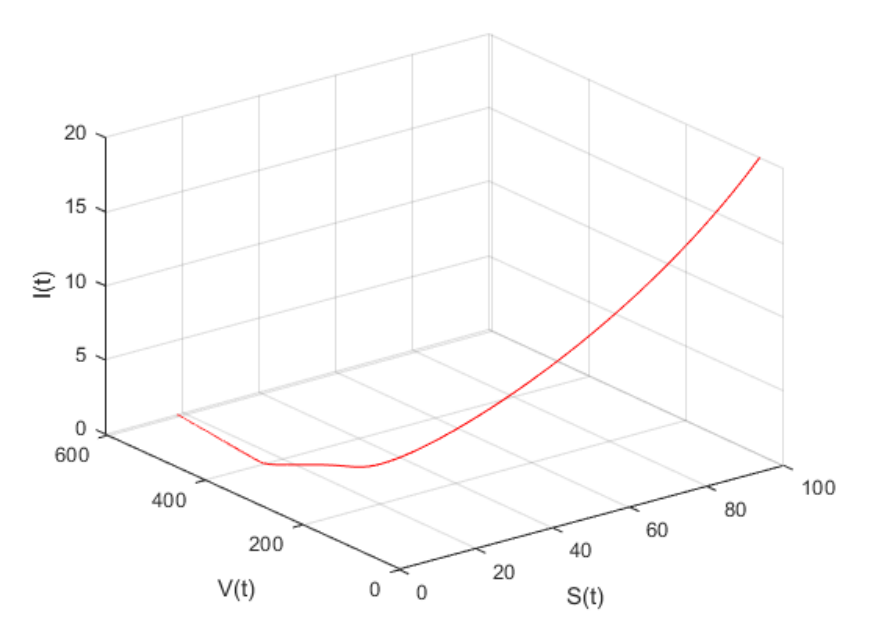}
		\caption{}
		\label{fig:image2}
	\end{subfigure}%
			\hfill
		\begin{subfigure}{0.3\textwidth}
			\centering
			\includegraphics[width=4.2cm,height=3.15cm]{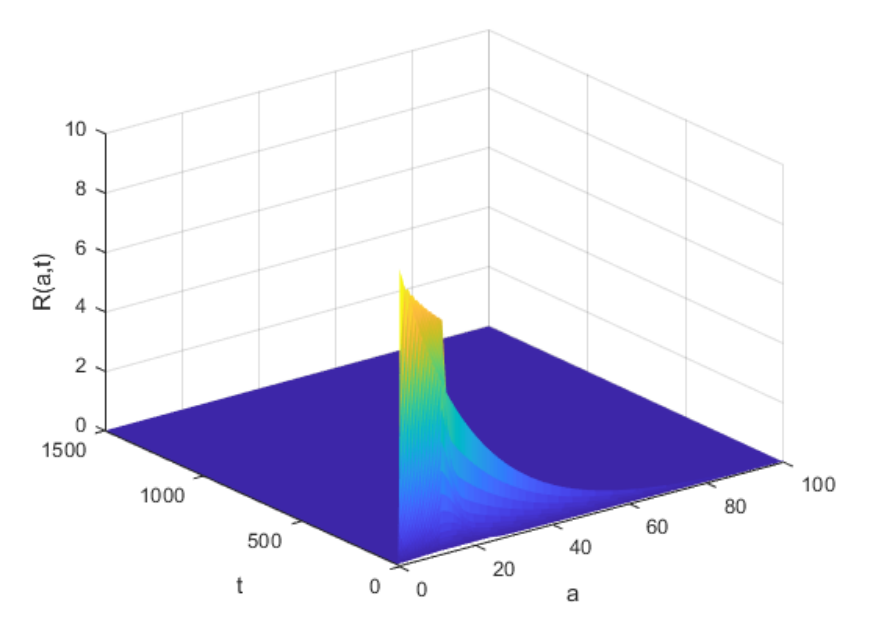}
			\caption{}
			\label{fig:image2}
		\end{subfigure}%
	\caption{ When $\Pi=5,\tau=12$, the disease-free equilibrium $E^0$ is locally asymptotically stable (LAS) for $\mathcal{R}_0 < 1$}
	\label{fig}
\end{figure}
\begin{figure}[htbp] % [htbp] 是位置参数，表示这里、顶部、底部或单独一页
	\centering
	% 第一行，三张子图
	\begin{subfigure}{0.3\textwidth}
		\centering
		\includegraphics[width=4.2cm,height=3.15cm]{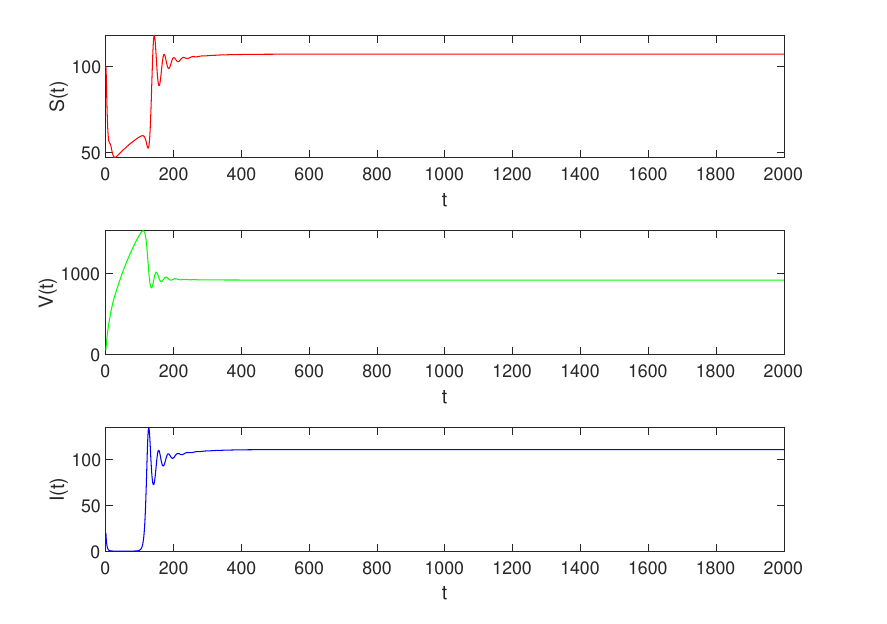}
		\caption{}
		\label{fig:image1}
	\end{subfigure}%
	\hfill
	\begin{subfigure}{0.3\textwidth}
		\centering
		\includegraphics[width=4.2cm,height=3.15cm]{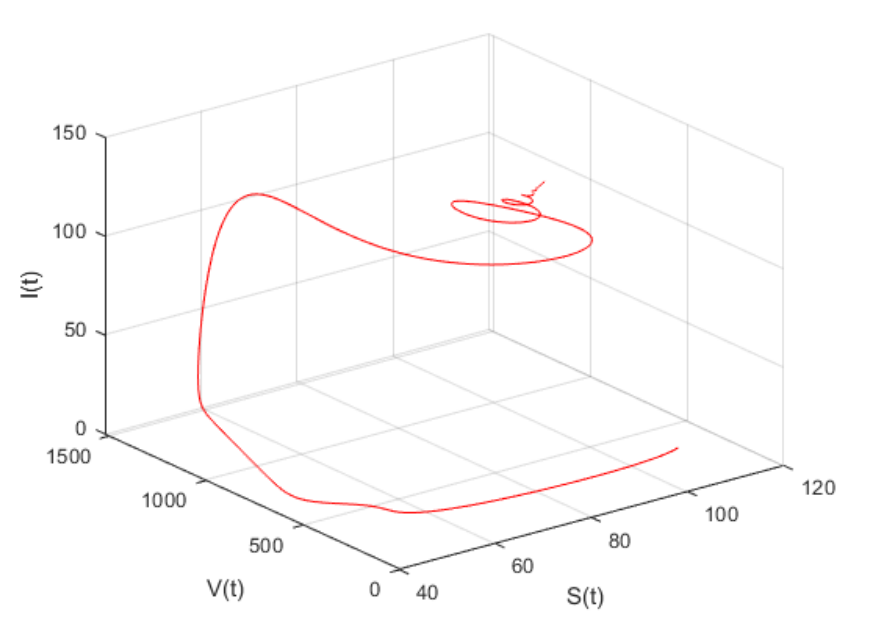}
		\caption{}
		\label{fig:image2}
	\end{subfigure}%
	\hfill
	\begin{subfigure}{0.3\textwidth}
		\centering
		\includegraphics[width=4.2cm,height=3.15cm]{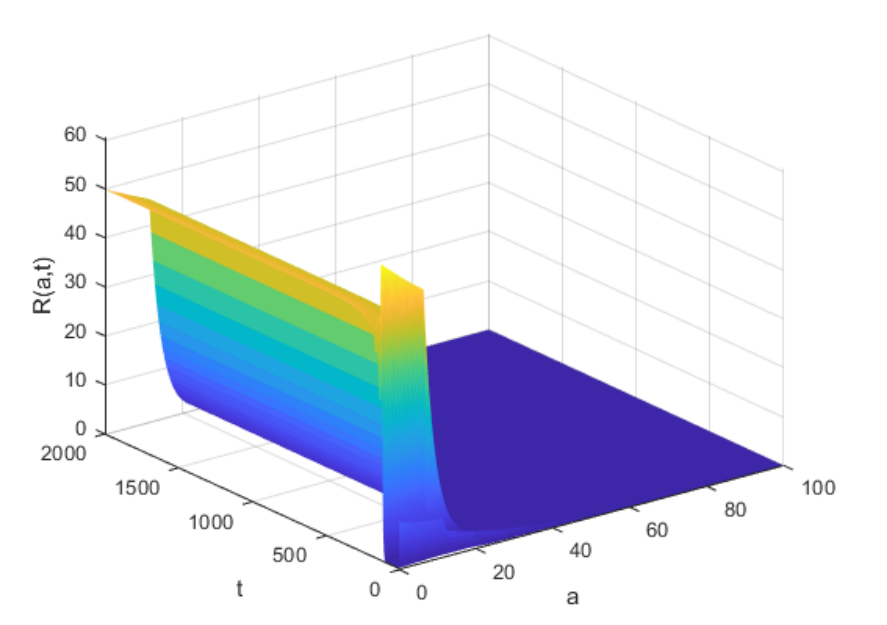}
		\caption{}
		\label{fig:image2}
	\end{subfigure}%
	\caption{ When $\Pi=20,\tau=12$, the endemic equilibrium $E^*$ is LAS for $\mathcal{R}_0 > 1$}
	\label{fig}
\end{figure}
\begin{figure}[htbp] % [htbp] 是位置参数，表示这里、顶部、底部或单独一页
	\centering
	% 第一行，三张子图
	\begin{subfigure}{0.3\textwidth}
		\centering
		\includegraphics[width=4.2cm,height=3.15cm]{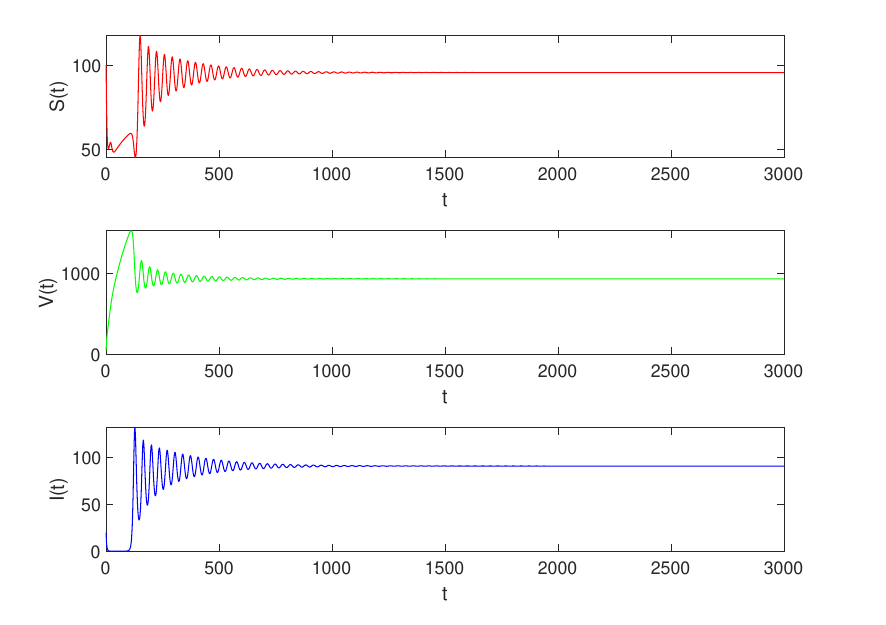}
		\caption{}
		\label{fig:image1}
	\end{subfigure}%
	\hfill
	\begin{subfigure}{0.3\textwidth}
		\centering
		\includegraphics[width=4.2cm,height=3.15cm]{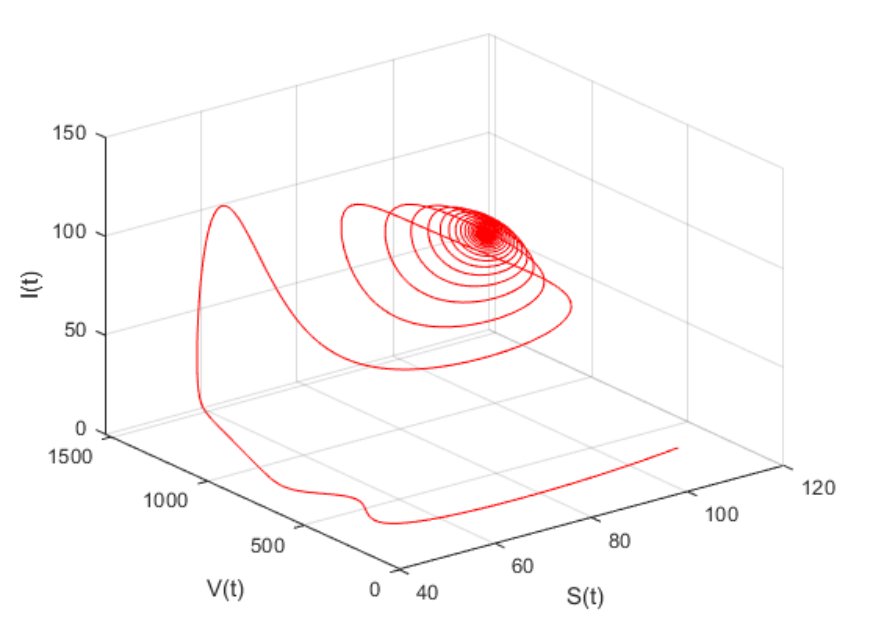}
		\caption{}
		\label{fig:image2}
	\end{subfigure}%
	\hfill
	\begin{subfigure}{0.3\textwidth}
		\centering
		\includegraphics[width=4.2cm,height=3.15cm]{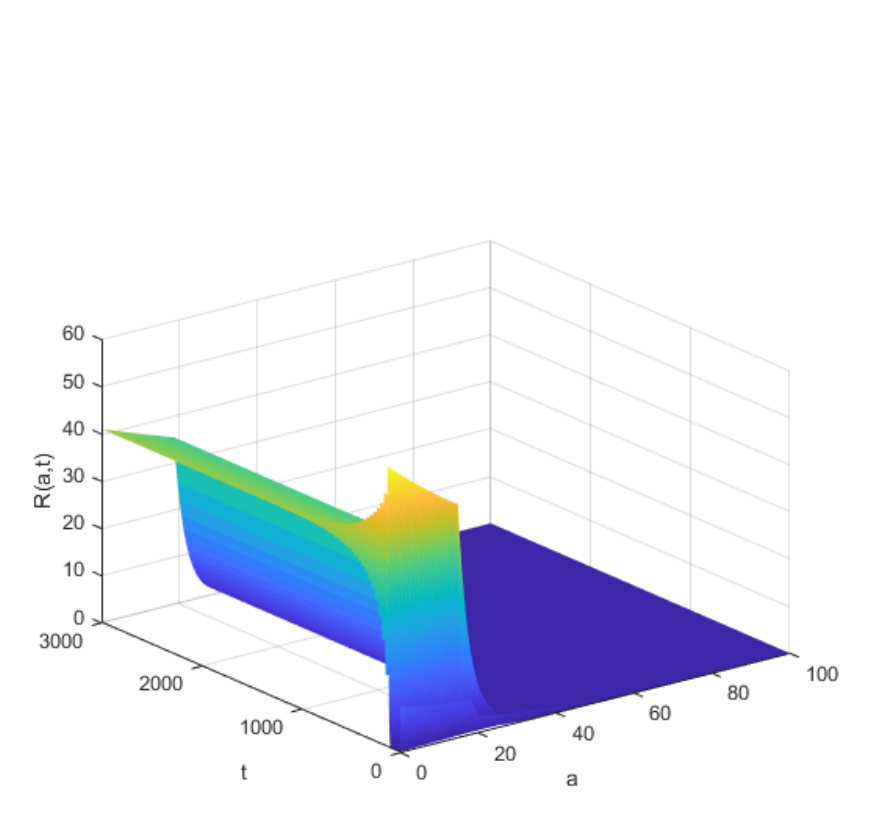}
		\caption{}
		\label{fig:image2}
	\end{subfigure}%
	\caption{ When $\Pi=20,\tau=19$, the endemic equilibrium $E^*$ is LAS for $\mathcal{R}_0 > 1$}
	\label{fig}
\end{figure}
\begin{figure}[htbp] % [htbp] 是位置参数，表示这里、顶部、底部或单独一页
	\centering
	% 第一行，三张子图
	\begin{subfigure}{0.3\textwidth}
		\centering
		\includegraphics[width=4.2cm,height=3.15cm]{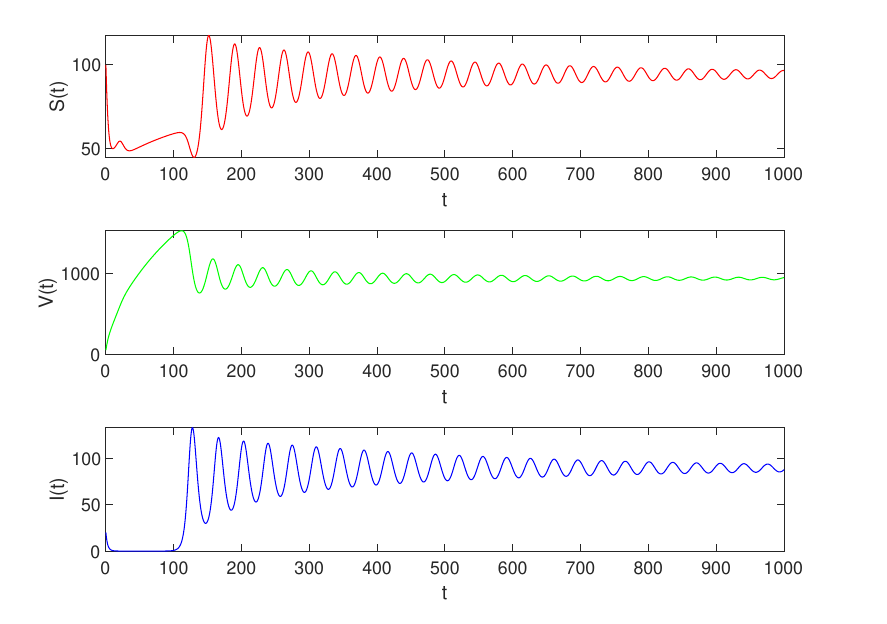}
		\caption{}
		\label{fig:image1}
	\end{subfigure}%
	\hfill
	\begin{subfigure}{0.3\textwidth}
		\centering
		\includegraphics[width=4.2cm,height=3.15cm]{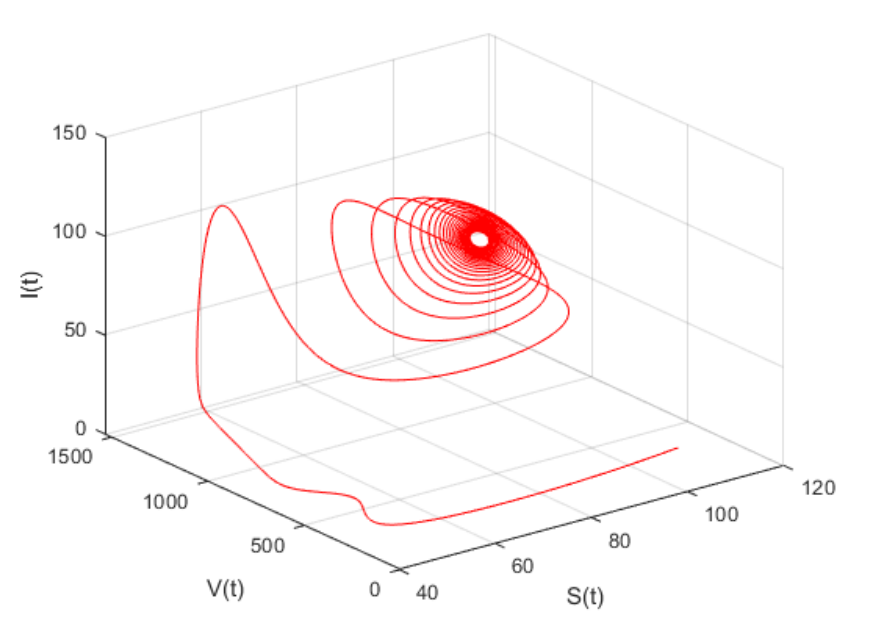}
		\caption{}
		\label{fig:image2}
	\end{subfigure}%
	\hfill
	\begin{subfigure}{0.3\textwidth}
		\centering
		\includegraphics[width=4.2cm,height=3.15cm]{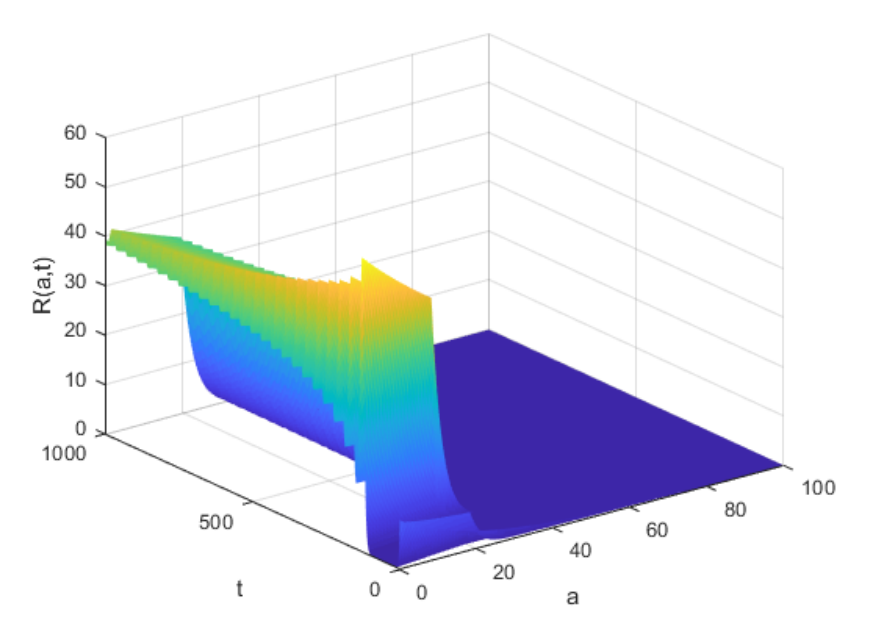}
		\caption{}
		\label{fig:image2}
	\end{subfigure}%
	\caption{When $\Pi=20,\tau=20$, the endemic equilibrium $E^*$ is not LAS for $\mathcal{R}_0 > 1$}
	\label{fig}
\end{figure}
\begin{figure}[htbp] % [htbp] 是位置参数，表示这里、顶部、底部或单独一页
	\centering
	% 第一行，三张子图
	\begin{subfigure}{0.3\textwidth}
		\centering
		\includegraphics[width=4.2cm,height=3.15cm]{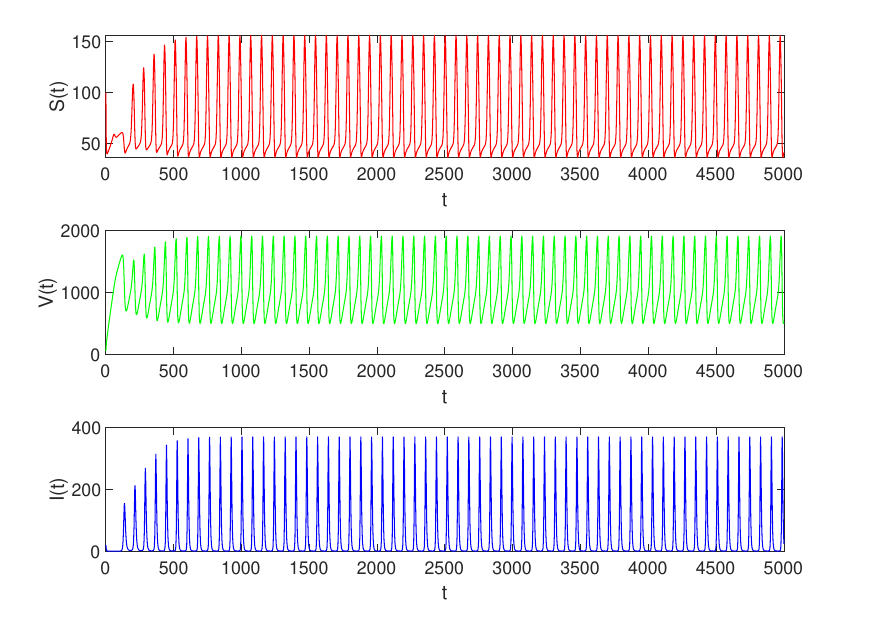}
		\caption{}
		\label{fig:image1}
	\end{subfigure}%
	\hfill
	\begin{subfigure}{0.3\textwidth}
		\centering
		\includegraphics[width=4.2cm,height=3.15cm]{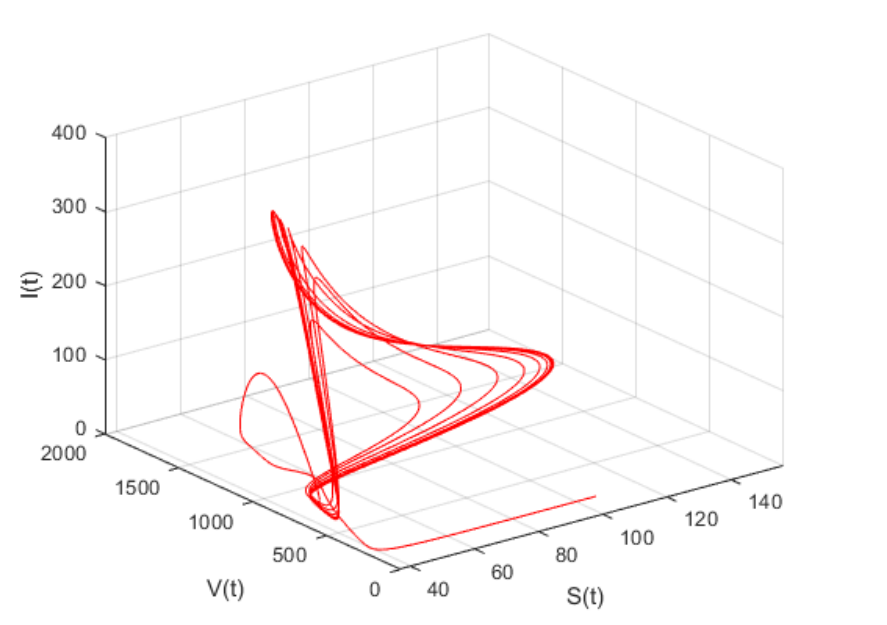}
		\caption{}
		\label{fig:image2}
	\end{subfigure}%
	\hfill
	\begin{subfigure}{0.3\textwidth}
		\centering
		\includegraphics[width=4.2cm,height=3.15cm]{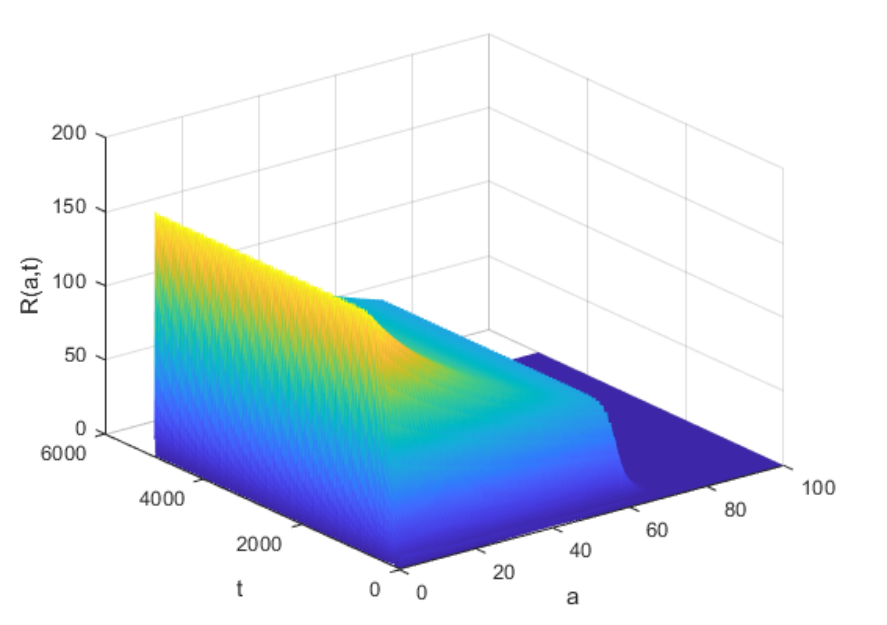}
		\caption{}
		\label{fig:image2}
	\end{subfigure}%
	\caption{ When $\Pi=20,\tau=60$, the endemic equilibrium $E^*$ is not LAS for $\mathcal{R}_0 > 1$}
	\label{fig}
\end{figure}
\section{Conclusion}
To investigate the impact of post-vaccination immune persistence on disease transmission, this paper constructs an SVIRS infectious disease model integrating recovery age structure and temporary immunity. First, we transform system \eqref{1.2} into a non-densely defined abstract Cauchy problem. Through Hille-Yosida operator theory, we verify that all solutions possess the essential properties of existence, uniqueness, non-negativity, and boundedness. Subsequently, from equilibrium analysis emerges the basic reproduction number $\mathcal{R}_0$, possessing clear epidemiological significance. Results show global asymptotic stability of the disease-free equilibrium when $\mathcal{R}_0<1$, while local asymptotic stability of the endemic equilibrium emerges when $\mathcal{R}_0>1$ under specific conditions.

Numerical simulations reveal that the immunity period $\tau$ plays a determining role in system stability. Specifically, when $\tau$ surpasses the critical threshold $\tau_0$, Hopf bifurcation emerges near the positive steady-state of the system. This manifests as the system remaining stable with short-term immunity ($\tau < \tau_0$), while generating periodic oscillations with long-term immunity ($\tau > \tau_0$). These results establish a theoretical foundation for comprehending how immune duration influences disease transmission dynamics, while simultaneously providing practical epidemic control strategies.

Considering the aforementioned research findings, we propose that the impact of immune duration should be fully considered in practical epidemic prevention and control: maintaining the stability of the system heavily relies on the timing of booster vaccine administration. Meanwhile, by appropriately regulating key parameters such as the immune decay rate $\vartheta^*$ and vaccine efficacy $\sigma$, disease control strategies can be effectively optimized.

\vskip 20 pt
\noindent{\bf Acknowledgement}
\vskip 10 pt
The first author is partially supported by the National Key Research and Development Program of China (Grant No. 2020YFA0713100).
\nocite{*}
\bibliography{t} 

\end{document}